\documentclass{amsart}  
\usepackage[parfill]{parskip}
\usepackage{graphicx, verbatim}
\usepackage{appendix}
\usepackage{amsfonts}
\usepackage{url}
\usepackage{hyperref} 
\hypersetup{backref,pdfpagemode=FullScreen,colorlinks=true}
\usepackage{amsmath}
\usepackage{amssymb}
\usepackage{tikz-cd}
\usepackage{amscd}
\usepackage{color}
\usepackage{amsthm}
\usepackage{bm}
\usepackage{indentfirst}
\usepackage[hmargin=3cm,vmargin=3cm]{geometry}
\numberwithin{equation}{section}

\newtheorem{theorem}[equation]{Theorem} 
\newtheorem{proposition}[equation]{Proposition}
 
\newtheorem{lemma}[equation]{Lemma} 
 
\newtheorem{corollary}[equation]{Corollary}

\newtheorem{definition}[equation]{Definition}
\theoremstyle{definition}

\newtheorem{terminology}{Terminology}

\theoremstyle{remark}

\newtheorem{remark}[equation]{Remark}
\newtheorem{example}{Example}
\newtheorem{question}{Question}

\DeclareMathOperator {\mult} {mult}

\DeclareMathOperator {\energy} {energy}

\DeclareMathOperator{\image}{\mathrm{image}}

\begin{document}
\title {Untwisted Gromov-Witten invariants of Riemann-Finsler
manifolds}
\author{Yasha Savelyev}
\email{yasha.savelyev@gmail.com}
\address{Faculty of Science, University of Colima, Mexico}
\keywords{locally conformally symplectic manifolds,
Gromov-Witten theory, virtual fundamental class, Fuller
index}

\begin{abstract} 
We define a $\mathbb{Q}$-valued deformation invariant of certain
complete Riemann-Finsler manifolds, in particular of complete
Riemannian manifolds with non positive sectional curvature. It is proved that every rational number is the value of this invariant for
some compact Riemannian manifold.
We use this to find the first and mostly sharp
generalizations, to non-compact products and fibrations, of Preissman's theorem on
non-existence of negative sectional curvature metrics on
compact products. For example, $\Sigma \times T ^{n}$ admits
a metric of negative sectional curvature, where
$\Sigma$ is a non-compact possibly infinite type
surface, if and only if $\Sigma$ has genus zero. We also give novel estimates on
counts of closed geodesics with restrictions on 
multiplicity. Along the way, we also prove that sky catastrophes of smooth dynamical
systems are not geodesible by a certain class of forward complete
Riemann-Finsler metrics, in particular by complete Riemannian metrics with
non-positive sectional curvature. This partially answers
a question of Fuller and gives important examples for our
theory here.
\end{abstract}
\maketitle
\section{Introduction}
We will define certain rational number valued deformation
invariants for certain complete Riemann-Finsler manifolds,
and in particular for all complete
Riemannian manifolds with non-positive sectional curvature.  These invariants can be
directly interpreted as the untwisted part of certain elliptic Gromov-Witten invariants in an associated lcs manifold,
~\cite{cite_SavelyevEllipticCurvesLcs}. The twist is coming by
way of metric isometries. In the more basic, untwisted
setting here, we reduce the invariants
to counts of \textbf{\emph{geodesic strings}}  (equivalence classes of closed, unit speed geodesics up to reparametrization $S ^{1}$ action), via the Fuller index
~\cite{cite_FullerIndex}. 

The latter count in modern
terms is just an orbifold count of points of virtual
dimension 0 Kuranishi spaces, corresponding to spaces of
geodesic strings in a fixed free homotopy class. But
Fuller's construction makes the latter elementary and
geometric.

Indeed, one of  the main ideas of this note is that one
can make geodesic string counts into a global metric deformation invariant, provided
we work with some metric curvature restrictions. Studying
connections with metric geometry was suggested by Fuller
himself in the 1960's. We now discuss some applications.

Recall that a now classical theorem of Preissman
~\cite{cite_PreissmanNegativeCurvatureProducts} implies that
there are no non-trivial compact products with negative
sectional curvature. We give the first generalizations of
this to the non-compact case. As a basic example:
\begin{theorem} \label{thm_IntrojOOOOOoTn0}
There is a complete negative sectional curvature metric $g$ on
$M = \Sigma \times T ^{n}$, where $\Sigma$ is a non-compact, possibly infinite type surface, if and only if
$\Sigma$ has genus zero.  
\end{theorem}
When the genus is zero, existence of such a $g$ is well
known. For example, if $\Sigma$ is the
infinite cylinder $g$ can be given as the warped product
metric. 

If in addition we require that $g$ be finite volume and 
that $M$ be the interior of a compact manifold with
boundary, one might try to prove that case of the theorem by
the following strategy. Assume such a $g$ exists. Use
``doubling'' to get a negatively curved metric on the
compact double.  Apply Preissman's theorem to get
a contradiction.
\begin{remark} \label{rem_}
We cannot just double, one needs to do a surgery on
ends, glue and deform to a smooth negatively curved metric. For
example, doubling of cusped hyperbolic surfaces is already complex.
\end{remark}
If $\Sigma$ is infinite type, for example the standard infinite genus
surface, the idea above totally fails. Our argument does not use
Preissman's theorem, and is inherently based on functional
analysis and infinite dimensional Morse theory in particular. 

The only if part of the above theorem is just a very special case of the
following. We denote by $\pi
_{1} ^{inc} (X)$ the set of free homotopy classes of loops
incompressible to the ends, in the sense of Definition
\ref{definition_boundaryincompressible}, (non-constant
classes when $X$ is closed). 
\begin{theorem} \label{corollary_product}
Let $X = Z \times Y$ where $Z,Y $ admit complete 
Riemannian metrics with non-positive sectional curvature,
$Y$ is closed,  $\pi _{1} ^{inc} (Z) \neq 0$, $\chi(Y) \neq
\pm 1$. Then:
\begin{itemize}
\item $X$ does not admit
a complete metric of negative sectional curvature, or
a forward complete Finsler metric with negative flag curvature. 
\item Moreover, $X$ does not even admit a forward complete
Finsler metric with a unique and non-degenerate class $\beta
$ geodesic string for any $\beta \in \pi _{1}  ^{inc} (Z)$.
\end{itemize}
\end{theorem}
This theorem is very close to being sharp, for example the
conclusion of the corollary is false if $Y=S ^1$ and $X
= S ^1 \times \mathbb{R}  $. As $Z = T ^{2} \times
\mathbb{R} ^{} $ admits the warped
product metric $g _{T ^{2}} \times _{e ^{t}} g _{\mathbb{R}
^{} }$ (with respect to the function $f= e ^{t}$ on
$\mathbb{R} ^{} $) where $g _{T ^{2}}, g _{\mathbb{R} ^{} }$
are the flat metrics. This warped product has constant negative sectional curvature $-1$.
So it is essential that not only $\pi _{1} (Z) \neq
0$ but also that there is a class incompressible to the
ends. Of course, $\chi(Y) =1$ is also
obviously essential, otherwise we may take $Y =pt$. The condition $\chi(Y) \neq -1$ is however
not obviously essential. 

We will also
give various generalizations of this result to fibrations.
For fibrations, the most obvious analogue of Preissman's theorem fails even
assuming compactness. In fact,  
every closed 3-manifold $X ^{3}$, for which there is no injection
$\mathbb{Z} ^{2} \to \pi _{1} (X, x _{0})$, and which
fibers over a circle has a hyperbolic structure $g _{h}$,
Thurston~\cite{cite_ThurstonClassificationSurfaceDiffeo}. 

A more direct type of application is to geodesic
counting with constraints on multiplicity. 
This is based
on the idea that our Gromov-Witten type invariant is a
metric deformation invariant provided there is a curvature control.
For a non-constant class $\beta \in \pi _{1} ^{inc} (X)$, we say that a metric $g$ on $X$ is $\beta $-\textbf{\emph{regular}} if all
of its class $\beta $ closed geodesics are non-degenerate
(meaning that the closed orbits of the associated geodesic
flow are non-degenerate, i.e. the associated fixed points
are non-degenerate.) For a geodesic string $o$, $\mult
(o) $ will denote its multiplicity, (the order of the
corresponding isotropy subgroup of $S ^{1}$,  where $S ^{1}$ is acting by 
reparametrization.) And $\operatorname {morse} (o) $ will
denote the Morse index of $o$, (meaning the Morse-Bott index
of the associated critical submanifold of the loop space.)

\begin{definition}\label{definition_Hadamardequivalent}
We say that a pair $g _{0}, g _{1}$ of forward complete
Finsler metrics with
non-positive flag curvature are
\textbf{\emph{Hadamard equivalent}} if there is
a continuous interpolation $\{g _{t}\}$, $t \in [0,1] $,
(with respect to the topology of $C
^{0}$ convergence on compact sets) s.t. each $g _{t}$ has
non-positive flag curvature.
\end{definition}
We avoid maximal generality in what follows, as these are
just sample applications. 
\begin{theorem} \label{thm_estimateIntro}
Suppose that $g _{1}$ is a forward complete, non-positive flag
curvature metric on $X$, Hadamard
equivalent to a forward complete metric of negative flag
curvature. 
Suppose that $\beta \in \pi _{1} ^{inc} (X)$ is a $k$-power,
(Definition \ref{definition_indivisible}). Let $C _{\beta} = 2 \cdot
L _{\beta}$, where $L _{\beta}$ is the length of a class $\beta$, $g _{1}$-geodesic.
For all $\epsilon >0$
sufficiently small, whenever $g'$ is $C
^{0}$ $\epsilon$ close to $g _{1}$, and is $\beta $-regular,
we have:
\begin{equation*} 
\displaystyle \sum_{o \in
\mathcal{O} _{C _{\beta}} (g',\beta)} \frac{(-1) ^{\operatorname {morse}  (o)}}
{\mult (o)} = \frac{1}{k},  
\end {equation*}
where $\mathcal{O} _{C _{\beta}} (g', \beta)$ is the set of class $\beta
$ geodesic strings with $g'$-length less than $C _{\beta}$.
\end{theorem}

We may use this to estimate the number of
closed geodesic strings with constraints on multiplicity and energy. Here is
a sample immediate corollary.
\begin{corollary} \label{cor_estimate}
Let $X$, $g _{0}$ and $g _{1}$ be as above. Suppose that $\beta \in \pi _{1} ^{inc} (X)$ is a $p$-power
for $p$ a prime. 
Then for all $\epsilon >0$ sufficiently small,  whenever
$g'$ is $C ^{0}$ $\epsilon$ close to $g _{1}$, 
and has a class $\beta$ geodesic string with $g'$-length
less than $C _{\beta}$, and with multiplicity
$k \neq p$, then $g'$ has at least two such geodesic strings.
\end{corollary}
The following theorem in particular says that for regular
metrics on $T ^{2}$ sufficiently nearby to the flat metric, fixed class geodesic strings with ``small length'' must come at
least in pairs, corresponding to multiplicity constraints. 
\begin{theorem} \label{thm_IntroTn}
Let $g _{0}$ be the standard flat metric $X=T ^{n}$. 
For all $\epsilon
>0$ sufficiently small, whenever $g'$ is a Finsler metric $C ^{0}$ $\epsilon$
close to $g _{0}$ and is $\beta $-regular, the following
holds.  Suppose $g'$ has a closed geodesic string $o$ s.t. 
\begin{enumerate}
	\item $o$ has class $\beta $.
	\item $p | \mult (o) $, where $p$ is prime.
	\item $g'$-length of $o$ is less than $C _{\beta}$.
\end{enumerate}
Then $g'$ has at least one other geodesic
string satisfying these conditions. 
Moreover, any Finsler metric $g _{1}$ Hadamard equivalent
to $g _{0}$ has the same property as $g _{0}$, above.
\end{theorem}
We will discuss the statements in more detail in what
follows.
\section{Setup and statements} \label{sec_Statement of results}
\begin{terminology} From now on, all our metrics are
Riemann-Finsler (a.k.a. Finsler)
metrics unless specified to be Riemannian, and usually denoted by
just $g$. 
Completeness, always
means forward completeness, and it is an assumption for all
our metrics. Curvature always means sectional
curvature in the Riemannian case and flag curvature in the
Finsler case. Thus we will usually just
say complete metric $g$, for a forward complete
Riemann-Finsler metric.  	A reader may certainly choose to
interpret all metrics as Riemannian metrics, completeness
as standard completeness, and curvature as sectional
curvature. 
\end{terminology}
In what follows $\pi _{1} (X)$ denotes the
set of free homotopy classes of continuous maps $o: S ^{1} \to X$. 

\begin{definition}\label{definition_boundaryincompressible}
Let $X$ be a smooth manifold. Fix an exhaustion by
nested compact sets $\bigcup _{i \in \mathbb{N}} K _{i}
= X$, $K _{i} \supset K _{i-1}$ for all $i \geq 1$.   We say
that a class $\beta \in \pi _{1} (X)$ is {end
compressible} if $\beta $ is in the image of $$inc _{*}: \pi
_{1}(X - K _{i}) \to \pi _{1} (X)$$  for all $i$, where
$inc: X - K _{i} \to X$ is the inclusion map. 
We say that $\beta $ is \textbf{\emph{end incompressible}}
(or incompressible to the ends)
if it is not end compressible.
\end{definition}
Let $\pi _{1} ^{inc} (X)$ denote the
set of such end incompressible classes. When $X$ is
compact, we set $\pi _{1} ^{inc} (X) := \pi _{1} (X)
- const$, where $const$ denotes the set of homotopy classes
of constant loops.

It is easily seen that the above is well defined
(independent of the choice of an exhaustion) and
moreover any homeomorphism $X _{1} \to X _{2}$ of
a pair of manifolds induces a set isomorphism $\pi _{1} ^{inc} (X _{1}) \to \pi _{1} ^{inc} (X _{2}) $.
Denote by $L _{\beta } X$ the class $\beta \in \pi _{1}
^{inc}(X) $ component of the free loop space of $X$, with
its compact open topology. Let $g$ be a complete metric on $X$, and let $S
(g, \beta) \subset L _{\beta } X$ denote the subspace of all
unit speed parametrized, smooth, closed $g$-geodesics in class $ \beta$.
\begin{definition} \label{definition_betataut}
We say that a metric $g$ on $X$ is \textbf{\emph{$\beta
$-taut}} if it is complete and $S (g, \beta )$ is compact. We will say that $g$ is \textbf{\emph{taut}} if
it is $\beta $-taut for each $\beta \in \pi _{1} ^{inc}
(X)$. 
\end{definition}
\begin{lemma} \label{lemma_nonpositiveistaut}
A complete metric $g$ with non-positive curvature satisfies:
\begin{itemize}
	\item All of its closed geodesics are minimizing in their
	free homotopy class.
	\item It is taut.
\end{itemize}
\end{lemma}
\begin{proof} [Proof]
The first part is a standard consequence of the
Cartan-Hadamard theorem. The second part follows by the first part and Lemma \ref{lemma_compact}.
\end{proof}
It should be emphasized that taut metrics form a much
larger class of metrics then just non-positive curvature
metrics. For example any sufficiently $C ^{1}$ small
perturbation of a metric with non-positive curvature will be
taut. (Indeed, this is crucial for the construction of our
invariant.)
Another class of examples comes by way of Lemma
\ref{lemma_betataut} ahead,  these metrics may not be
non-positively curved nor nearby to metrics non-positively
curved. 
\begin{definition}\label{def_tauthomotopy} Let 
$ \beta \in \pi_{1} ^{inc}(X) $, and let
$g _{0}, g _{1}$
be a pair of $\beta $-taut metrics on $X$. 
A \textbf{\emph{$\beta $-taut deformation}} between $g _{0}, g _{1}$, is a continuous (in the topology of $C ^{0}$
convergence on compact sets) family $\{g _{t}\}$, $t \in
[0,1]$ of complete metrics on $X$, 
s.t. $$S (\{g _{t}\}, \beta ) := \{(o,t) \in L _{\beta }X
\times [0,1] \,|\, o \in S (g _{t}, \beta )\}$$ is compact.
We say that $\{g _{t}\}$ is a \textbf{\emph{taut deformation}}  if it is $\beta $-taut for
each $ \beta  \in \pi _{1} ^{inc}(X)$.  The above definitions
of tautness are extended naturally to the case of a smooth fibration $X
\hookrightarrow  P \to [0,1]$, with a smooth fiber-wise family of metrics.
\end{definition}
A useful criterion for $\beta $-tautness is the following.
\begin{theorem} \label{thm:notgeodesible} Let $\{g _{t}\}
_{t \in [0,1]}$ be a continuous family of complete
metrics on $X$.
Suppose that:
\begin{equation*}
\sup _{t} |\max _{o \in S  (g _{t}, \beta )} l _{g _{t}} (o) -  \min
_{o \in S  (g _{t}, \beta )} l _{g _{t}} (o)| < \infty, 
\end{equation*}
where $l _{g _{t}}$ is the length functional
with respect to $g _{t}$,
then $\{g _{t}\}$ is $\beta $-taut.
It follows that sky catastrophes of vector fields on closed manifolds are not geodesible by
metrics all of whose geodesics are minimal, Appendix
\ref{appendix_bluesky}. 
\end{theorem}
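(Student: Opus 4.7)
\begin{pfs}
The plan is to show that any sequence $\{(o_n, t_n)\} \subset S(\{g _{t}\}, \beta)$ has a subsequence converging in $L_\beta X \times [0,1]$. After one extraction we may assume $t_n \to t_\infty \in [0,1]$, and the task reduces to extracting a limit of the loops $o_n$.

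First I would establish a uniform upper length bound. Fix any smooth representative $o_\ast \in L_\beta X$ of the class $\beta$; the map $t \mapsto l_{g_t}(o_\ast)$ is continuous on $[0,1]$ and therefore bounded by some $M$. Because $\beta$ is boundary incompressible and each $g_t$ is complete, minimizing sequences in $L_\beta X$ stay in a compact part of $X$, so a length-minimizing loop in class $\beta$ exists and is automatically a smooth closed geodesic; this forces $\min_{o \in S(g_t, \beta)} l_{g_t}(o) \leq M$ whenever $S(g_t, \beta)$ is nonempty. Combined with the oscillation hypothesis this yields a uniform bound $l_{g_{t_n}}(o_n) \leq M + C$, where $C$ is the finite supremum from the statement.

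Next I would establish uniform compact containment. Boundary incompressibility produces an index $i_0$ such that every loop representing $\beta$ meets $K_{i_0}$, so after reparametrization and a further subsequence I may assume $o_n(0) \to x_\infty \in K_{i_0}$. I would then show that $\bigcup_{t \in [0,1]} B_{g_t}(K_{i_0}, M+C)$ is relatively compact in $X$: since $t \mapsto g_t$ is continuous in the topology of $C^0$ convergence on compact sets, the $g_t$ are uniformly comparable to a fixed reference metric on each compact set; if some minimizing $g_{t_n}$-path of length $\leq M+C$ starting in $K_{i_0}$ escaped every compact set, a subsequential $C^0$-limit would produce a $g_{t_\infty}$-path of length $\leq M+C$ leaving every compact set, contradicting Hopf--Rinow for the complete metric $g_{t_\infty}$. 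Thus every image $o_n([0,L_n])$, with $L_n := l_{g_{t_n}}(o_n)$, lies in one fixed compact $K \subset X$.

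Finally I would invoke Arzel\`a--Ascoli. After a subsequence $L_n \to L_\infty > 0$ (positivity coming from the fact that the systole in class $\beta$ is positive for each complete $g_t$ and depends upper-semicontinuously on $t$). The reparametrizations $\widetilde{o}_n(s) := o_n(sL_n)$ are uniformly Lipschitz loops with images in $K$, so a subsequence converges uniformly to a constant-speed loop $\widetilde{o}_\infty$. Standard bootstrapping from the geodesic ODE, using smoothness of each $g_t$ and convergence of the Christoffel symbols on $K$ as $t_n \to t_\infty$, upgrades this to $C^\infty$ convergence and identifies the unit-speed reparametrization of $\widetilde{o}_\infty$ as an element of $S(g_{t_\infty}, \beta)$, the class $\beta$ being preserved under $C^0$ limits. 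The main obstacle will be the compact containment step: converting mere continuity of $\{g_t\}$ in the local $C^0$ topology, together with completeness of each $g_t$, into the uniform statement that $g_t$-balls of fixed radius centered in $K_{i_0}$ all sit in one compact set as $t$ ranges over $[0,1]$. Everything afterward is classical Arzel\`a--Ascoli plus ODE stability.
\end{pfs}
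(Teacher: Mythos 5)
Your proposal is correct and follows the same three-step skeleton as the paper's proof: a uniform bound on the lengths of the geodesics occurring in $S(\{g_t\},\beta)$, then compact containment of their images via boundary incompressibility, then Arzel\`a--Ascoli. Where you genuinely differ is in the first step. The paper argues by contradiction: assuming the lengths are unbounded, it fixes a minimal class-$\beta$ geodesic $o_\infty$ for the limit metric $g_{t_\infty}$, transfers its length bound to nearby $g_{t_k}$ using a $C^0$ pseudo-distance on metrics over a compact set containing $\image o_\infty$, and concludes that the length oscillation at time $t_k$ exceeds the assumed constant $c$. Your version is more direct: bound $\min_{o\in S(g_t,\beta)} l_{g_t}(o)$ uniformly in $t$ by the length of one fixed reference loop $o_\ast$ (using continuity of $t\mapsto l_{g_t}(o_\ast)$), then add the oscillation constant. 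Both arguments rest on the same two ingredients --- existence of a class-$\beta$ length minimizer for a complete metric when $\beta$ is boundary incompressible, and $C^0$-on-compacta continuity of the family --- so the difference is packaging rather than substance, though yours avoids the auxiliary pseudo-metric. Two small repairs to your write-up. First, in the compact-containment step one cannot take a ``subsequential $C^0$-limit'' of paths that escape every compact set; the correct move is to truncate each escaping path at its first exit from a fixed Hopf--Rinow ball $\overline{B_{g_{t_\infty}}(K_{i_0}, 2(M+C))}$ (compact by completeness of $g_{t_\infty}$) and compare $g_{t_n}$- and $g_{t_\infty}$-lengths of the truncated segment on that compact set; note the paper itself only cites ``an analogue'' of Lemma \ref{lemma_containedinCompact} here, so you are not losing rigor relative to it. Second, $L_\infty>0$ does not follow from upper semicontinuity of the systole (that inequality points the wrong way); it follows simply because all the images lie in a fixed compact $K$ on which the $g_t$ are uniformly bounded below by a reference metric and $\beta$ is a nonconstant free homotopy class.
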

For example, the hypothesis is trivially satisfied if $g
_{t}$ have the property that all their class $\beta
$ closed geodesics are minimal in their homotopy  class.
\begin{corollary} \label{cor_}
If $g _{t}$, $t \in [0,1]$ have non-positive 
curvature then $\{g _{t}\}$ is taut.
\end{corollary}
\begin{proof} [Proof]
This follows by the theorem and by Lemma
\ref{lemma_nonpositiveistaut}. 
\end{proof}

Fuller at the end of ~\cite{cite_FullerIndex} has asked for
any metric conditions on vector fields to rule out sky
catastrophes, see Appendix \ref{appendix_bluesky}. By the
above, non-positivity of curvature is one such
condition. So this is a partial answer to his question.

Note that if sky catastrophes
were never geodesible (or at least if geodesible sky
catastrophes are necessarily unstable, as was conjectured in ~\cite{cite_SavelyevFuller}) then the geodesible Seifert conjecture would follow, by the main
result of ~\cite{cite_SavelyevFuller}. Hence, this is a subtle
question. 
The qualitative structure of such
potential geodesible or Reeb sky catastrophes is somewhat
understood, ~\cite[Theorem 1.10]{cite_SavelyevFuller}.
But
this does not greatly aid constructing potential
examples, which must be topologically very complex, (there
are necessarily infinitely many suitably synchronized bifurcation
events).  No results prior to the theorem above
are known to me aside from those    mentioned by Fuller
himself in ~\cite{cite_FullerIndex}. 

\subsection{The geodesic counting invariant $F$} \label{sec_Definition of the invariant F}
Let $\mathcal{G} (X)$ be the set of equivalence classes of
taut metrics $g$, where $g _{0}$ is equivalent to $g _{1}$ whenever there is
a taut deformation between them.  We may denote an
equivalence class by its representative $g$ by a slight abuse
of notation. 

\begin{theorem} \label{thm:invariantF}
For each manifold $X$ there is a natural, non-trivial
functional: $$\operatorname {F}: \mathcal{G} (X) \times \pi
_{1} ^{inc} (X) \to \mathbb{Q}. $$ 
\end{theorem}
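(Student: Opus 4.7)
The plan is to define $\operatorname{F}(g, \beta)$ as the Fuller index ~\cite{cite_FullerIndex} of the compact flow-invariant set in the unit sphere bundle corresponding to $S(g, \beta)$, and to deduce invariance under taut deformation from the homotopy invariance axiom of the Fuller index.

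First, I would realize $S(g, \beta)$ as a compact invariant set of the geodesic flow. The geodesic flow $\phi^{s}_{g}$ acts on the unit sphere bundle $S_{g} X := \{v \in TX : |v|_{g} = 1\}$, and the map $o \mapsto (o(0), \dot o(0))$ sends a unit-speed closed $g$-geodesic of length $T$ to the initial condition of a $T$-periodic orbit of $\phi^{s}_{g}$. The image of $S(g, \beta)$ under this map is a compact, flow-invariant set $\mathcal{O}(g, \beta) \subset S_{g} X$ consisting precisely of those periodic orbits whose underlying free loop class in $X$ is $\beta$. The Fuller index assigns a rational number to any such compact isolated family of periodic orbits of a smooth flow; in the non-degenerate case one has
\[
\operatorname{Fuller}(\mathcal{O}) = \sum_{\gamma \in \mathcal{O}/S^{1}} \frac{\sign \det (\Id - P_{\gamma})}{m(\gamma)},
\]
where $P_{\gamma}$ is the linearized Poincar\'e return map of the underlying primitive orbit and $m(\gamma)$ is its iterate multiplicity, and in general the index is defined by $C^{\infty}$-small generic perturbation. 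Set $\operatorname{F}(g, \beta) := \operatorname{Fuller}(\mathcal{O}(g, \beta))$.

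For invariance under $\beta$-taut deformations $\{g_{t}\}_{t \in [0,1]}$, the hypothesis that $S(\{g_{t}\}, \beta)$ is compact translates directly into compactness of the total space $\bigsqcup_{t} \mathcal{O}(g_{t}, \beta) \times \{t\}$ of the corresponding one-parameter family of periodic orbit sets of the continuously varying geodesic flows. This is exactly the input required by the homotopy axiom of the Fuller index, yielding $\operatorname{F}(g_{0}, \beta) = \operatorname{F}(g_{1}, \beta)$, so $\operatorname{F}$ descends to $\mathcal{G}(X) \times \pi_{1}^{inc}(X)$. Naturality with respect to diffeomorphisms $X_{1} \to X_{2}$ is automatic, since the construction involves no auxiliary choices beyond the data $(g, \beta)$.

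The main technical obstacle will be handling the fact that the ambient unit sphere bundle $S_{g_{t}} X$ itself varies with $t$ during a deformation, so the family of flows does not literally live on a fixed manifold. I would resolve this either by identifying all $S_{g_{t}} X$ with a reference sphere bundle via fiberwise radial normalization, or preferably by working with the $2$-homogeneous geodesic spray on the $t$-independent space $TX \setminus 0$, where the unit-speed condition can be imposed after the Fuller computation via the scaling action. A secondary, milder issue is that $X$ (and hence $S_{g} X$) may be non-compact; however the Fuller index is a local invariant of any neighborhood of the compact set $\mathcal{O}(g, \beta)$, and boundary incompressibility of $\beta$ together with $\beta$-tautness prevents class-$\beta$ periodic orbits from escaping to infinity under small perturbation, so the index remains well-defined.
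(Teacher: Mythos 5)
Your proposal is correct and follows essentially the same route as the paper: both define $\operatorname{F}(g,\beta)$ as the Fuller index of the compact family of closed orbits of the geodesic flow (the paper phrases this as the Reeb flow of the Liouville form on the unit cotangent bundle, which is the same flow under Legendre duality) in the lifted class $\widetilde{\beta}$, and both deduce invariance from Fuller's basic homotopy invariance applied to the compact orbit cobordism coming from the $\beta$-taut deformation. The varying-bundle issue you flag is handled in the paper exactly by your first suggested fix, namely trivializing the family of unit (co)sphere bundles to obtain a family of conjugate flows on a fixed manifold.
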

The value $\operatorname {F} (g, \beta )  $ is a certain
weighted count of the set of closed  $g$-geodesic
strings in class $\beta $.  
But one must take care of exactly how to count, as in general this set should be
understood as an orbifold or rather a Kuranishi space (as
introduced by Fakaya-Ono ~\cite{cite_FukayaOnoArnoldandGW}), hence this is why $\operatorname {F} $ is
$\mathbb{Q} $ valued. 
In the special case when $g$ is $\beta $-regular we have the
following formula:
\begin{equation*}
F (g, \beta ) = \displaystyle \sum_{o \in
\mathcal{O} (g,\beta)} \frac{(-1) ^{\operatorname {morse}  (o)}}
{\mult (o)}, 
\end{equation*}
where $\operatorname {morse} (o)$ and $\mult (o) $ are as in
the Introduction.

\begin{question} \label{question_tauthomotopy} Do there
exist a pair of taut metrics $g _{1}, g _{2}$ on a manifold
$X$ which are not taut homotopic? Or more generally not
homotopic via a family of metrics without a sky catastrophe?
\end{question}
Probably both possibilities are interesting. If the answer
is `no' then we can obtain much sharper applications of
Theorem \ref{thm:invariantF}, particularly in the setup of
~\cite{cite_SavelyevEllipticCurvesLcs}.  Moreover, in this
case $F$ becomes a topological invariant, which is
a priori unrelated to any classical topological invariant.  
On the other hand:
\begin{corollary} \label{proposition_existsSky} Suppose
for a pair $g _{1}, g _{2}$ of $\beta $-taut metrics on $X$:
\begin{equation*}
F (g _{1}, \beta ) \neq F (g _{2}, \beta),
\end{equation*}
then any path $\{g _{t}\}$, connecting $g _{0},
g _{1}$, is not $\beta$-taut and in fact has a sky
catastrophe. So that if such a pair $g _{1}, g _{2}$ exists
the conjecture of ~\cite{cite_SavelyevFuller} would be
disproved.
\end{corollary}
\begin{proof} [Proof]
The fact that any connecting $\{g _{t}\}$ is not $\beta $-taut is just a direct
corollary of the theorem above. The fact that $\{g _{t}\}$
has a sky catastrophe follows by \cite[Theorem 3.2
]{cite_SavelyevFuller}.
\end{proof}
If the answer to the question above is `yes', then we
should be able to use the above corollary to find non-positively
curved metrics which cannot be joined by a continuous family of
non-positively curved metrics.
(Apparently, existence of such metrics is open.)

\begin{definition}\label{definition_indivisible}
Let $\beta \in \pi _{1} (X)$. For any based point $x _{0}
\in \image \beta \subset X$ (for $\image \beta $ the image
of some representative of $\beta $)  there is 
a naturally determined element $\beta _{x _{0}} \in \pi _{1} (X, x _{0})$ well defined up to
an inner automorphism, (concatenate a representative of
$\beta $ with a path from $x _{0}$ to a point in $\image \beta$). 
\begin{itemize}
 \item We say that a class $\beta \in \pi _{1} (X, x _{0})$ is 
\textbf{\emph{not a power}} if whenever $\beta = \alpha
^{k}$ for some $\alpha, k > 0$ then $k=1$. 
\item We say that a class $\beta \in \pi _{1} (X, x _{0})$ is 
a $k$-\textbf{\emph{power}} if $\beta = \alpha ^{k}$ for
some $\alpha $ which is not a $n$-power for any $n$. 
\item We say that $\beta$ is \textbf{\emph{atomic}} if it is
a $k$-power for some $k$. 
\footnote{As we are working with non-compact
manifold, we may in principle have non atomic classes.}
\item We say that $\beta \in \pi _{1} (X)$ is not
a power, respectively is a $k$-power, respectively is atomic
if for any $x _{0}$ as above, $\beta  _{x _{0}}$ is not
a power, respectively is a $k$-power, respectively is
atomic.
\end{itemize}
\end{definition}
Note that if a class $\beta \in \pi _{1} ^{inc} (X)$ is not
a power then any representative of this class is not
multiply covered, but the converse generally does not hold.
\begin{example} \label{exm_negativecurvatureFuller} Let $g$ be
a Riemannian metric with negative sectional curvature on a closed
manifold $X$ and $ \beta
\in \pi _{1} (X)$ a class represented by a multiplicity $n$ closed
geodesic, then 
\begin{equation} \label{equation_frac1n}
F (g, \beta ) = \frac{1}{n}.
\end{equation}
In particular,
if $\beta $ is not a power then $F (g, \beta ) = {1}$. More generally,
\eqref{equation_frac1n} holds whenever $g$ has a unique
and non-degenerate geodesic string in class $\beta $, where
non-degenerate is as in the Introduction.
\end{example}
\begin{theorem} \label{theorem_valuesOfInvariant} Every
rational number has the form $F (g, \beta )$ for some
$\beta$-taut Riemannian $g$ on some compact manifold $X$ and
for some $\beta \in \pi _{1} ^{inc} (X)$.
\end{theorem}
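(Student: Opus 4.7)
My plan is to realize any rational $p/q$ as $F(g,\beta)$ via a direct product construction. Given $p \in \mathbb{Z}$ and $q \in \mathbb{Z}_{>0}$, I would choose a closed Riemannian manifold $(Y, g_Y)$ of non-positive sectional curvature with $\chi(Y) = p$: for $p \leq 0$ any (possibly non-orientable) hyperbolic surface of Euler characteristic $p$ suffices, and for $p > 0$ I would take a closed hyperbolic $4$-manifold of Euler characteristic $p$, which by Gauss--Bonnet--Chern has $\chi = \frac{3}{4\pi^2}\vol(Y)>0$, invoking the constructions of Ratcliffe--Tschantz building on Davis to realize each positive integer. Let $X = S^1 \times Y$ with product Riemannian metric $g = ds^2 + g_Y$, and put $\beta = (q, 0) \in \pi_{1}(X) \cong \mathbb{Z} \oplus \pi_{1}(Y)$. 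The metric $g$ has non-positive sectional curvature and is hence taut by the remarks after Definition~\ref{definition_betataut}.

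Next I would identify $S(g, \beta)$ exactly. Any unit-speed closed $g$-geodesic splits as $t \mapsto (s(t), y(t))$ with $s(t)$ affine and $y(t)$ a $g_Y$-geodesic, by the product structure. The class $\beta = (q, 0)$ forces $y(t)$ to be a contractible closed geodesic of $Y$, and Cartan--Hadamard on $\widetilde Y$ forces such $y(t)$ to be constant. Unit speed together with winding number $q$ then give $s(t) = s_{0} + t$ on a circle of length $q$, so $S(g, \beta)$ is diffeomorphic to $(\mathbb{R}/q\mathbb{Z}) \times Y$, namely the $S^{1}$-reparametrization family of $q$-fold iterates of the primitive curves $t \mapsto (s_{0}+t, y_{0})$. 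I then compute $F(g, \beta) = \chi(Y)/q = p/q$ using the Morse--Bott version of the Fuller index built into the definition of $F$: the moduli of closed geodesic strings in class $\beta$ is $S(g, \beta)/S^{1} \cong Y$; every string has multiplicity exactly $q$ because $(1, 0)$ is the unique $q$-th root of $(q, 0)$ in the torsion-free group $\pi_{1}(X)$; and the transverse Jacobi operator along each geodesic reduces to $\nabla_{t}^{2} \xi = 0$ for $\xi \in TY$, since the product curvature tensor satisfies $R(\xi, \partial_{s})\partial_{s} = 0$. The Morse index thus vanishes and the Morse--Bott contribution is $+\chi(Y)/q$.

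The main obstacle I expect is this last computation: verifying in detail that the paper's virtual (Kuranishi/orbifold) count of $q$-fold iterates of a Morse--Bott family of primitive orbits with identity transverse Poincar\'e map and vanishing Morse index yields $+\chi(Y)/q$ with the correct overall sign, and that this is consistent with Example~\ref{example:hyperbolic} in the special case where the Morse--Bott family collapses to a single orbit. A secondary technical ingredient is the realizability of every positive integer as the Euler characteristic of a closed hyperbolic $4$-manifold, for which I would invoke the constructions of Davis and of Ratcliffe--Tschantz.
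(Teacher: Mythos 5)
Your overall strategy is the same one the paper uses: realize $p/q$ as (Euler characteristic of one factor of a Riemannian product) times (the contribution $1/q$ of a multiplicity-$q$ closed geodesic in the other factor). Indeed your $S^1\hookrightarrow S^1\times Y\to Y$ with $\beta=(q,0)$ is exactly the situation of Theorem \ref{thm:EulerProduct} with fiber $Z=S^1$ and $F(g_Z,\beta_Z)=1/q$, so rather than redoing the Morse--Bott/Fuller-index computation by hand (which you rightly flag as the delicate point --- the sign bookkeeping for the $(-1)^{\operatorname{morse}(y)}$ weights and the $1/q$ multiplicity is precisely what Lemma \ref{lemma_epsilon} and the index computation in the proof of Theorem \ref{thm:EulerProduct} carry out), you should simply invoke \eqref{equation_mainformula}. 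The negative and zero cases of your construction then go through.

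The genuine gap is in the positive case. You need a closed non-positively curved $Y$ with $\chi(Y)=p>0$, and the source you propose --- closed hyperbolic $4$-manifolds realizing every positive integer as Euler characteristic --- does not exist in the literature: the Ratcliffe--Tschantz result realizing every positive integer is for \emph{cusped} (non-compact, finite-volume) hyperbolic $4$-manifolds, while for closed hyperbolic $4$-manifolds the smallest known Euler characteristic is $16$ (Conder--Maclachlan; the Davis manifold has $\chi=26$), and whether $\chi=1$ occurs is open. Since your fibers must be compact (tautness of the fiber family and finiteness of the holonomy orbit are needed), you cannot substitute the cusped examples. Two repairs are available. Either take $Y$ to be a product of two closed non-orientable hyperbolic surfaces with Euler characteristics $-1$ and $-p$ (such surfaces exist for every positive integer, and the product is non-positively curved with $\chi=p$); or, as the paper does, iterate the construction: first realize every negative rational $-p/(2q)$ as $F(g_Z,\beta_Z)$ using a genus-$(p+1)$ base times a genus-$2$ fiber with a $2q$-fold covered simple geodesic, then multiply by a genus-$2$ base with $\chi=-2$ to flip the sign. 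The iteration trick is what lets the paper stay entirely within hyperbolic surfaces and avoid any existence question in dimension $4$.
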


If $\beta \in \pi _{1} ^{inc} (X)$ is not a power, then it
is easy to see that that the reparametrization $S ^{1}$
action on $L _{\beta }X$ is free (see Appendix \ref{appendix:Fuller}), so that  $H ^{S ^{1}} _{*} (L
_{\beta}X, \mathbb{Z}) \simeq H  _{*} (L
_{\beta}X/ {S ^{1}}, \mathbb{Z}) $, where $H ^{S ^{1}} _{*} (L
_{\beta}X, \mathbb{Z})$ denotes the $S ^{1}$-equivariant
homology. Moreover, we have:
\begin{theorem} \label{theorem_Eulercharacteristic}
Suppose that $\beta \in \pi _{1} ^{inc} (X)$ is not a power,
and $X$ admits a $\beta $-taut metric, then $H ^{S ^{1}} _{*} (L
_{\beta}X, \mathbb{Z})$ is finite dimensional.
Denote by $\chi ^{S ^{1}} (L _{\beta} X)$ the Euler characteristic of this homology.
Then for any $\beta $-taut metric $g$ on $X$:
\begin{equation*}
F (g, \beta ) = \chi ^{S ^{1}}(L _{\beta }X).  
\end{equation*}
\end{theorem}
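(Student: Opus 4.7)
The plan is to run Morse theory for the energy functional on $L_\beta X$, and identify $F(g,\beta)$ as an Euler characteristic via the standard interpretation of the Fuller index for non-power classes.

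I would work on the Hilbert completion $\Lambda_\beta X$ of $L_\beta X$, on which the energy $E(\gamma) = \int_0^1 g(\dot\gamma,\dot\gamma)\,dt$ is smooth and has critical set equal (after reparametrization) to $S(g,\beta)$. The $\beta$-taut hypothesis gives that $S(g,\beta)$ is compact, hence all critical values of $E$ lie in a compact interval. A Palais--Smale type compactness for $E$ (classical in the Riemannian case, extended to Finsler by Mercuri and Caponio--Javaloyes--Masiello, and adapted here using tautness to control the non-compactness of $X$) then gives that the negative gradient flow of $E$ produces an $S^1$-equivariant strong deformation retraction of $\Lambda_\beta X$ onto an arbitrarily small open neighborhood $U$ of $S(g,\beta)$. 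Since geodesics are smooth solutions of an ODE, $S(g,\beta)$ is a compact smooth submanifold, and $U$ may be chosen to retract onto it.

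Because $\beta$ is not a power, the $S^1$-action on $L_\beta X$ is free (cf.\ Appendix \ref{appendix:Fuller}), so the retraction descends to $\Lambda_\beta X / S^1$, yielding $H^{S^1}_*(L_\beta X, \mathbb{Z}) \simeq H_*(S(g,\beta) / S^1, \mathbb{Z})$, which is finite dimensional. For the equality $F(g,\beta) = \chi^{S^1}(L_\beta X)$, I would first reduce to the case where all closed $g$-geodesic strings in class $\beta$ are Morse--Bott non-degenerate by perturbing within a $\beta$-taut deformation (using the invariance of both sides under such deformations, plus the fact that $\beta$-tautness is $C^0$-open once the critical set is already compact, as in Theorem \ref{thm:notgeodesible}). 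In this generic case each string $o$ contributes $(-1)^{\ind_E(o)}$ to $F(g,\beta)$ by the Fuller-index formula for isolated non-degenerate closed orbits of trivial multiplicity (triviality here is exactly the non-power condition). Standard Morse--Bott theory on the quotient $\Lambda_\beta X / S^1$, whose sublevel filtration becomes essentially finite-dimensional by the retraction above, identifies this alternating sum with $\chi(\Lambda_\beta X / S^1) = \chi^{S^1}(L_\beta X)$.

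The main obstacle is establishing the Palais--Smale condition and the convergence of the negative gradient flow for $E$ on the free loop space of a possibly non-compact Finsler manifold. The classical Riemannian proofs combine forward completeness with coercivity of $E$ at spatial infinity; in our setting what prevents a gradient trajectory from escaping to infinity in $X$ must come from $\beta$-tautness, via translating compactness of $S(g,\beta)$ into a uniform bound on how far a loop of bounded energy in $L_\beta X$ can be displaced under the flow before converging to a critical point. Proving this bound carefully, and thereby justifying the Morse-theoretic deformation retraction underlying both the finite-dimensionality statement and the Euler-characteristic identification, is the crux of the argument.
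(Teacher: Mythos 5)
Your overall strategy (Morse theory for the energy functional on the $H^1$ loop space, free $S^1$ action because $\beta$ is not a power, Fuller index of a non-degenerate string contributing $(-1)^{\operatorname{morse}(o)}$, invariance under perturbation) is the same as the paper's, but the central topological step is wrong. You claim that the negative gradient flow of $E$ gives an $S^1$-equivariant deformation retraction of $\Lambda_\beta X$ onto an arbitrarily small neighborhood of $S(g,\beta)$, and hence that $H^{S^1}_*(L_\beta X,\mathbb{Z})\simeq H_*(S(g,\beta)/S^1,\mathbb{Z})$. The gradient flow does not retract the whole space onto (a neighborhood of) the critical set: under Palais--Smale it only retracts $\Lambda_\beta X$ onto a sublevel set above the top critical value, and the homotopy type of that sublevel set is built from the critical set by attaching cells of dimension equal to the Morse(--Bott) indices. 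Concretely, take a bumpy metric on a closed surface with three geodesic strings in a primitive class $\beta$ (two local minima of length and a saddle); then $L_\beta X/S^1$ is connected, so $H_0=\mathbb{Z}$, whereas your formula would give $H_0(S(g,\beta)/S^1)=\mathbb{Z}^3$. Relatedly, your assertion that $S(g,\beta)$ is a smooth submanifold because geodesics solve an ODE is unjustified: critical sets of the energy can be degenerate and singular; only after the generic perturbation is $S(g',\beta)$ a finite union of non-degenerate circles.

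The finite-dimensionality and the Euler characteristic identity are still true, but for a different reason, which is how the paper argues: first perturb (using compactness of $S(g,\beta)$ and the local invariance \eqref{eq_basicinvariance2} of the Fuller index) to a nearby metric $g'$ with finitely many non-degenerate class $\beta$ geodesic strings; then the Morse--Bott decomposition of $L_\beta X$ by unstable manifolds of the critical circles descends, via the free topological $S^1$ action, to a CW decomposition of $L_\beta X/S^1$ with exactly one $k$-cell per string of index $k$ (cf.\ \cite{cite_GromollMeyerGeodesics}). Finitely many cells give finite-dimensional homology, and $\chi(L_\beta X/S^1)=\sum_o(-1)^{\operatorname{morse}(o)}=F(g',\beta)=F(g,\beta)$. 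So you should replace the claimed retraction onto $S(g,\beta)$ by this cell-decomposition argument; your concluding paragraph about Palais--Smale on non-compact Finsler manifolds addresses a real analytic point, but it is not the crux --- the missing piece is the correct Morse-theoretic model for the homotopy type of $L_\beta X/S^1$.
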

Explicit examples for the theorem above can be found by the proof of Theorem
\ref{theorem_valuesOfInvariant}. For these types of examples
any negative integer may appear as the value of $F (g, \beta
)$. We leave out the details.
\begin{remark} \label{example_indivisible}
If $\beta $ is a power, the idea behind Theorem \ref{theorem_Eulercharacteristic} breaks
down, as the $S ^{1}$-equivariant homology of $L
_{\beta }X$ may then be infinite dimensional even if $X$ admits
a $\beta $-taut $g$. As a trivial example this homology is
already infinite dimensional when $g$ is negatively curved,
and the class $\beta $ geodesic is $k$-covered, as then this
homology is the group homology of $\mathbb{Z} _{k}$.
In particular the connection with the Euler
characteristic a priori breaks down. It is thus an interesting  open problem if the 
functional $F$ remains topological, this is related to question \ref{question_tauthomotopy}.
\end{remark}

\subsection{Applications to existence of negative curvature
metrics} \label{sec_Applications to existence of negative curvature
metrics}
A celebrated theorem of Preissman ~\cite{cite_PreissmanNegativeCurvatureProducts} says that there are no negative sectional curvature metrics on compact products.
Fibration counterexamples to  Preissman's product theorem
certainly exist as mentioned in the Introduction. We are going to give a certain generalization of Preissman's
theorem to fibrations, with possibly non-compact fibers,
also replacing the negative sectional curvature condition by
a significantly weaker condition.

\begin{definition}\label{def:Gsubmersion}
Let $Z \hookrightarrow X \xrightarrow{p} Y$ be a smooth fiber
bundle with $X$ having a $\beta $-taut
Riemannian metric $g$, for $ \beta \in
\pi_{1} ^{inc}(X)$, and let $g _{Y}$ be a metric on $Y$.
Suppose that: 
\begin{enumerate}
	\item The fibers $Z _{y} = p ^{-1} (y)$  are totally $g$-geodesic, for
	closed geodesics in class $\beta $. We denote by $g _{y}$
	the metric $g$ restricted to $Z _{y}$.
	\item The fibers are parallel (the distribution $T
	^{vert}X = \ker p_*$ is parallel along any smooth curve in $X$
	with respect to the Levi-Civita connection of $g$).
	\item For any pair of fibers $(Z _{y_0}, g _{y _{0}})$, $(Z
	_{y_1}, g _{y _{1}})$, and a path $\gamma: [0,1] \to Y$ from $y
	_{0}$ to $y _{1}$ the fiber family $\{(Z _{\gamma (t)},
	g _{\gamma (t)})\}$ furnishes a taut deformation. \label{item_tauthomotopyfibers}
	\item $p$ projects $g$-geodesics to geodesics of $Y,g
	_{Y}$. \label{item_pProjectionOfgeodesics}
\end{enumerate}
We then call $p: X \to Y$ a  \textbf{\emph{$\beta $-taut
fibration}}, with the metrics $g,g _{Y}$ and $g _{Z}$
all possibly implicit. 
\end{definition}
\begin{definition}\label{definition_fiberclass}
For $Z \hookrightarrow X \to Y$ as above, we say that $\beta \in \pi _{1} (X)$ is
a \textbf{\emph{fiber class}}  if it is in the image of the
inclusion $i _{Z}: \pi _{1} (Z) \to \pi _{1} (X) $.
\end{definition}
In the above definition of a taut fibration and the following theorem we need
the auxiliary metric $g$ on $X$ to be Riemannian, and there is no obvious
extension of the theorem to the Riemann-Finsler case.
However, the conclusions of the theorem are for Riemann-Finsler metrics.
\begin{theorem} \label{thm:Fibration}   Let $p: (X,g) \to
(Y, g _{Y})$ be
a $\beta $-taut fibration, where $ \beta \in
\pi_{1} ^{inc}(X)$ is a fiber class.
Suppose further that $Y$ is connected, 
closed, $\chi(Y) \neq \pm 1$ and is such that all smooth closed contractible $g _{Y}$-geodesics in $Y$ are constant. Then the following holds:
\begin{itemize}
\item $X$ does not admit a complete Riemann-Finsler metric with negative curvature. 
\item Moreover, $X$ does not admit
a complete Riemann-Finsler metric with
a unique and non-degenerate class $\beta $ geodesic string. \label{part_chipm1}

\end{itemize} 
\end{theorem}
Note that $\chi(Y) \neq 1$ is of course essential, as  the
trivial fibration $X \to \{pt\}$, with $X$ admitting
a complete negatively curved metric, will satisfy the
hypothesis. The condition that there is a fiber class $\beta
\in \pi _{1} ^{inc} (X)$ is also essential, for any vector
bundle over a manifold admitting a Riemannian metric of negative
curvature admits a metric of negative curvature, Anderson
~\cite{cite_AndersonNegativeCurvaturefibration}. 

Theorem \ref{corollary_product} gives one set of examples.
A further basic set of examples for the theorem is obtained by starting
with any homomorphism 
\begin{equation} \label{equation_phi}
\phi: \pi _{1} (Y, y _{0}) \to \operatorname
{Isom} (Z, g _{Z}),  \quad \text{(the group of all isometries).} 
\end{equation}
where $g _{Z}$ is a taut Riemannian metric, and there is a class $ \beta _{Z} \in \pi_{1}
^{inc}({Z})$ (for example $(Z,g _{Z})$ is a non-simply
connected complete hyperbolic surface).  Suppose further:
\begin{enumerate}
\item  The orbit 
$$O := \bigcup _{\gamma \in \pi _{1} (Y,
y _{0}) } {\phi} _{*} (\gamma ) (\beta _{Z})$$ is 
 finite. \label{item_finiteorbit}
	\item $Y$ is closed and connected.
	\item All contractible smooth closed $g _{Y}$ geodesics in $Y$ are
	constant.
\end{enumerate}
We have the obvious induced diagonal action $$\pi
_{1} (Y, y _{0}) \to \operatorname {Diff}  (Z \times
\widetilde{Y}), \text{ (the group of all
diffeomorphisms)},$$ $$\gamma \mapsto ((z, y) \mapsto (\phi
(\gamma) (z), \gamma \cdot y)), $$
for $\widetilde{Y} $ the universal
cover of $Y$.  Taking
the quotient of $Z \times \widetilde{Y} $ by this action, we get an associated ``flat'' bundle $Z
\hookrightarrow X _{\phi} \xrightarrow{p} Y$, with a metric
$g _{\phi }$
induced from the product metric $\widetilde{g} = g _{Z}
\oplus g _{Y}$, on the covering space $q: Z \times
\widetilde{Y} \to Z \times Y$. 
\begin{lemma} \label{lemma_betataut}
Let $p: (X _{\phi}, g _{\phi }) \to (Y, g _{Y})$  be as above, then this is
a $\beta $-taut fibration, where $\beta = i _{*} (\beta
_{Z}) $, for $i _{*}: \pi_{1} ^{inc}(Z) \to \pi_{1}
^{inc}(X _{\phi})$  induced by inclusion. 
\end{lemma}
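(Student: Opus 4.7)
The plan is to unpack the flat bundle construction and verify the four axioms of Definition \ref{def:Gsubmersion} together with the $\beta$-tautness of $g_\phi$ and the boundary incompressibility of $\beta$ in $X_\phi$. The central observation is that $g_\phi$ is locally a Riemannian product $g_Z\oplus g_Y$: the covering $q:Z\times\widetilde{Y}\to X_\phi$ is a local isometry because the diagonal $\pi_1(Y,y_0)$-action acts by isometries of $\widetilde{g}$ (since $\phi$ lands in $\operatorname{Isom}(Z,g_Z)$ and the deck action on $\widetilde{Y}$ is by isometries of the pulled-back $g_Y$). Three of the four axioms follow at once from this local product structure: each fiber is locally a product slice and hence totally geodesic (axiom (1)); the Levi-Civita connection of a product splits, so the vertical distribution is parallel (axiom (2)); and geodesics of $g_\phi$ project to $g_Y$-geodesics under $p$ (axiom (4)).

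For axiom (3), given a path $\gamma:[0,1]\to Y$, lift it to $\widetilde{\gamma}:[0,1]\to\widetilde{Y}$. The map $(z,t)\mapsto q(z,\widetilde{\gamma}(t))$ identifies the pullback fibration $\gamma^{*}X_\phi$ isometrically with the trivial family $(Z,g_Z)\times[0,1]$. Hence the fiber family is, as a family, isometric to a constant family, and the condition of being a taut deformation reduces immediately to the hypothesis that $g_Z$ is taut.

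To prove $\beta$-tautness of $g_\phi$, let $c$ be any unit-speed closed $g_\phi$-geodesic of class $\beta$. Since $\beta$ is a fiber class, $p\circ c$ represents the trivial free homotopy class in $Y$; by axiom (4) it is a closed $g_Y$-geodesic, hence contractible and therefore constant by hypothesis. So $c$ lies entirely in a single fiber $Z_y$. Its free homotopy class in $Z_y\simeq Z$ must map to $\beta$ under $i_*$, and from the semidirect product description $\pi_1(X_\phi)\cong\pi_1(Z)\rtimes_\phi\pi_1(Y)$ one sees that two elements of $\pi_1(Z)$ represent the same free homotopy class in $X_\phi$ exactly when they lie in the same $\phi$-orbit. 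Thus the possible fiber classes of $c$ form precisely the finite orbit $O$, and
\[
S(g_\phi,\beta)\;=\;\bigcup_{y\in Y}\bigcup_{\alpha\in O}S(g_y,\alpha).
\]
Each fiber is isometric to $(Z,g_Z)$, each $\alpha\in O$ lies in $\pi_1^{inc}(Z)$ (since $\operatorname{Isom}(Z,g_Z)$ preserves $\pi_1^{inc}(Z)$), and each $S(g_Z,\alpha)$ is compact by tautness of $g_Z$, so the right-hand side is a continuous family of finite unions of compact sets over the compact base $Y$, hence compact. Boundary incompressibility of $\beta$ in $X_\phi$ follows from that of $\beta_Z$ in $Z$ together with compactness of $Y$, via a similar lift-to-the-cover argument using the finite-orbit hypothesis.

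The main obstacle is the algebraic step just used: identifying the set of fiber classes in $\pi_1(Z)$ that represent $\beta\in\pi_1(X_\phi)$ with the $\phi$-orbit $O$. Once this identification is in place, the finite-orbit hypothesis converts an a priori infinite fibered problem into finitely many compactness assertions, each handled by tautness of $g_Z$.
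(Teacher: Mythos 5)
Your proof is correct and follows essentially the same route as the paper: the paper's proof consists precisely of the identification $S(g_\phi,\beta)=\bigcup_{\alpha\in O}q_*\bigl(S(g_Z,\alpha)\times\widetilde{Y}\bigr)$, obtained from the local product structure, the constancy of contractible $g_Y$-geodesics, and the finiteness of the orbit $O$, with compactness then following from tautness of $g_Z$. You are in fact more explicit than the paper, which leaves the verification of the four submersion axioms and the boundary incompressibility of $\beta$ in $X_\phi$ implicit.
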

By the lemma above, $p: (X _{\phi}, g _{\phi }) \to (Y, g _{Y})$  satisfies the hypothesis of the theorem above.
Yet more concretely:
\begin{example} \label{example_}
Suppose we have $\beta
_{Z} \in \pi_{1} ^{inc}({Z})$, and 
let $\phi: Z \to Z$ be an isometry of a 
taut metric $g _{Z}$.
Then by the construction above, the
mapping torus $$(Z, g _{Z}) \hookrightarrow  (X _{\phi},
g _{\phi})  \xrightarrow{\pi
_{} } S ^{1}$$  has the structure of a $\beta$-taut
fibration,
satisfying the hypothesis of the theorem, for $\beta
= i _{*} (\beta _{Z}) $ as above.
\end{example}
The next corollary of Theorem \ref{thm:Fibration} is
immediate.
\begin{corollary} \label{corollary_Thurston0}
Let $$(Z _{g, Z}) \hookrightarrow (X _{\phi}, g _{\phi}) \to
(Y, g _{Y})$$ be as in the
construction above for $Z, g _{Z}$ having non-positive
curvature, and let $ \beta _{Z} \in \pi_{1} ^{inc}({Z})$.
Then if $\chi(Y) \neq \pm 1 $:
\begin{enumerate}
\item $X _{\phi }$ does not admit a complete Riemann-Finsler metric with negative curvature.
\item Moreover, $X _{\phi}$ does not admit a Riemann-Finsler metric with
a unique and non-degenerate class $\beta$ geodesic string, for $\beta = i _{*} (\beta _{Z}) $ as above.
\end{enumerate}
As a special
case, this applies to the mapping tori $X _{\phi }$, for
$\phi: Z \to Z $ an isometry of a complete Riemannian
non-positively curved metric on $Z$, satisfying the
finiteness condition \ref{item_finiteorbit}. (The non-positive curvature hypothesis is for concreteness we may of course replace this condition by tautness.)
\end{corollary}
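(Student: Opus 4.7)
The plan is to reduce Corollary \ref{corollary_Thurston0} directly to Theorem \ref{thm:Fibration} via Lemma \ref{lemma_betataut}, since the corollary is a straightforward consequence once those results are available. The bulk of the work is already done upstream; what remains is essentially a verification that the hypotheses line up.

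The first step is to observe that $g_Z$ is taut: being complete and non-positively curved, the Cartan-Hadamard theorem combined with Lemma \ref{lemma_compact} ensures every boundary incompressible closed geodesic is minimizing in its homotopy class, hence $g_Z$ is taut and in particular $\beta_Z$-taut. The hypotheses on $\phi$ (finite orbit, together with $Y$ closed and connected with only constant contractible $g_Y$ geodesics) are exactly those required by Lemma \ref{lemma_betataut}, which then tells us that $p: (X_\phi, g_\phi) \to (Y, g_Y)$ is a $\beta$-taut submersion with $\beta = i_*(\beta_Z) \in \pi_1^{inc}(X_\phi)$, and $\beta$ is a fiber class by construction. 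Theorem \ref{thm:Fibration} then yields both parts of the corollary.

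One subtle point worth checking for part 2 is that $\beta = i_*(\beta_Z)$ really is at most a $k$-power in $\pi_1(X_\phi)$, not merely that $\beta_Z$ has this property in $\pi_1(Z)$. For the flat bundle $X_\phi$ we have a semidirect product $\pi_1(X_\phi) = \pi_1(Z) \rtimes_\phi \pi_1(Y, y_0)$; any factorization $\beta = \alpha^n$ pushes to $p_*(\alpha)^n = 1 \in \pi_1(Y)$, and a short analysis in the semidirect product (which is cleanest when $\pi_1(Y)$ is torsion-free, as for mapping tori with $Y = S^1$) forces $\alpha$ to come from the fiber subgroup $\pi_1(Z)$, thereby transferring the power bound from $\beta_Z$ to $\beta$.

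For the mapping torus special case one takes $Y = S^1$ with its standard round metric: then $\chi(S^1) = 0 \neq \pm 1$, every smooth closed geodesic is a non-contractible multiple cover of $S^1$ itself, so the contractible-geodesic hypothesis holds automatically, and Example \ref{example_} explicitly identifies the mapping torus with the general construction. The real obstacle in this entire story lies upstream, inside Lemma \ref{lemma_betataut} and Theorem \ref{thm:Fibration}; the corollary itself is essentially a matter of unwinding definitions.
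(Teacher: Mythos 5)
Your proposal is correct and takes essentially the same route as the paper, which offers no separate argument and simply declares the corollary immediate from Theorem \ref{thm:Fibration} applied to the $\beta$-taut submersion supplied by Lemma \ref{lemma_betataut} (with tautness of $g_Z$ coming from Cartan--Hadamard and Lemma \ref{lemma_compact}). Your additional verification that the $k$-power bound transfers from $\beta_Z$ to $\beta = i_*(\beta_Z)$ --- via $p_*(\alpha)^n = 1$, torsion-freeness of $\pi_1(Y)$ (automatic for a closed aspherical $Y$, which the constant-contractible-geodesic hypothesis forces), and injectivity of the fiber inclusion on $\pi_1$ --- is a detail the paper leaves implicit, and spelling it out only strengthens the write-up.
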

In the special case when $Z$ is compact, the first part of
the above corollary can be deduced, with some work, from Preissman's theorem (specifically, because of the condition
\ref{item_finiteorbit}), see also ~\cite[Theorem
9.3.4]{cite_BuragoLengthSpaces} for a generalization that
fits our Finsler setting. The second part is new even when
$Z$ is compact.
In the non-compact case, as far as I know, the above
corollary and the more basic Theorem
\ref{corollary_product}, are the only presently known
extensions of Preissman's theorem. 

\section{Proof of Theorem \ref{thm:notgeodesible}}
The first part of the theorem clearly follows by the second
part. So let $\{g _{t}\}$, $t \in [0,1]$  be as in the
hypothesis, with 
\begin{equation} \label{eq_mainC}
\sup _{t} |\max _{o \in S  (g _{t}, \beta )} l _{g _{t}} (o) -  \min
_{o \in S  (g _{t}, \beta )} l _{g _{t}} (o)| < C, 
\end{equation}
and suppose that 
\begin{equation*}
\sup _{(o,t) \in \mathcal{O} (\{g _{t}\},\beta )} l _{g _{t}} (o) = \infty.
\end{equation*}
Then we have a sequence $\{o _{k}\}$, $k \in \mathbb{N}
$, of closed class $\beta $ $g _{t _{k}}$-geodesics
in $X$, satisfying:
\begin{enumerate} 
	\item $\lim
_{k \to \infty } t _{k} = t _{\infty } \in [0,1]. $
\label{cond:1}
\item  $\lim _{k \to \infty }l _{g _{t _{k}}} (o _{k})
= \infty $,  where $l _{g _{t _{k}}} (o _{t _{k}})$ is
the length with respect to $g _{t _{k}}$. \label{cond:2}
\end{enumerate}
Let $o _{\infty}$ be a minimal,
class $\beta $, $g _{\infty
} = g _{t _{\infty }}$ geodesic in $X$. And let $L$ denote
its length $g _{\infty }$ length. Let $g _{aux}$ be
a fixed auxiliary metric on $X$, and let $L _{aux} $ be the
$g _{aux }$ length of $o _{\infty }$.

Define a pseudo-metric $d _{C_0}$ on the space of metrics on
$X$ as follows. Set $K = \image o _{\infty}$. And
set $$V \subset TX = \{v \in TX \,|\, \pi ({v})  \in
K \text{ for $\pi _{}: TX \to X$ the canonical
projection, and $|v| _{aux} = 1$} \},$$
where $|v| _{aux}$ is the norm taken with
respect to $g _{aux}$.

Then define: $$d _{C ^{0}} (g _{1}, g _{2}) = \sup _{v \in
V} | |v| _{g _{1}} - |v| _{g _{2}}|. $$  

By Properties \ref{cond:1} and \ref{cond:2} we may find
a  $k > 0$ such that: 
\begin{equation} \label{eq:lessepsilon}
d _{C ^{0}} (g _{t
_{k}}, g _{t _{\infty }}) < \epsilon 
\end{equation}
and 
\begin{equation} \label{eq_Laux}
l _{g _{t _{k}}}
(o _{k}) > C + L + L _{aux} \cdot \epsilon.
\end{equation}

By \eqref{eq:lessepsilon},
we have:
$$l _{g _{t _{k}}} (o
_{\infty}) < l _{g _{t _{\infty }}} (o _{\infty}) + L _{aux}
\cdot \epsilon = L + L _{aux} \cdot \epsilon.$$
Combining with \eqref{eq_Laux} we get:
\begin{equation*}
l _{g _{t _{k}}}
(o _{k}) > l _{g _{t _{k}}} (o _{\infty }) +C.
\end{equation*}
Since we may find a closed $g _{t _{k}}$-geodesic $o'$
satisfying $l _{g _{t _{k}}} (o') \leq l _{g _{t _{k}}} (o
_{\infty })$,
we get that
$$|\max _{o \in S  (g _{t _{k}}, \beta)} l _{g _{t _{k}}} (o)
-  \min _{o \in S  (g _{t _{k}}, \beta)} l _{g _{t _{k}}} (o)|> C,
$$
and so we are in contradiction. 

Thus, \begin{equation*}
\sup _{(o,t) \in \mathcal{O} (\{g _{t}\},\beta )} l _{g
_{t}} (o) < \infty.
\end{equation*}
It follows, by an analogue of Lemma \ref{lemma_containedinCompact}, 
that the images of all elements $o \in S 
(\{g  _{t} \}, {\beta} ) $ are contained
in a fixed compact ${T} \subset X$. Compactness of $S 
(\{g  _{t} \}, {\beta} ) $ then readily follows by the Arzella-Ascolli
theorem. 
\qed
\section{Proof of Lemma \ref{lemma_betataut}} \label{sec_Proof of Lemma lemma_betatau}
Let ${\phi}_*: \pi _{1} (Y, y _{0}) \to
\operatorname {Aut} (\pi_{1} ^{inc}(Z))  $ be the natural
induced action, where $\operatorname {Aut} (\pi_{1}
^{inc}(Z))$ denotes the group of set isomorphisms of	$\pi_{1}
^{inc}(Z))$. And such that the orbit $$O := \bigcup _{\gamma \in \pi _{1} (Y,
y _{0}) } {\phi} _{*} (\gamma ) (\beta _{Z})$$ is 
finite. 

As $g _{Z}$ is taut, $S (g _{Z}, {\phi}
_{*} (\gamma ) (\beta _{Z}))$ is compact for each $\gamma $, 
where $S (g _{Z}, {\phi}
_{*} (\gamma ) (\beta _{Z}))$ is the space of geodesics as
in Definition \ref{definition_betataut}. By the condition on contractible geodesics of $g _{Y}$, we
get: 
\begin{align*}
	S (g _{\phi }, \beta ) & = q_* (S (g _{Z} \oplus g _{Y},
	\beta)) ) \\
	& = \bigcup
_{\beta  \in O } q_*(S (g _{Z}, \beta ) \times \widetilde{Y}),  
\end{align*}
for $q _{*}: L(Z \times
\widetilde{Y}) \to L(Z \times Y)$ induced by the quotient map $q: Z \times
\widetilde{Y} \to Z \times Y$, (as in the preamble to the
statement of the lemma) and where $S (g
_{Z}, \gamma) \times \widetilde{Y}$ is understood as
a subset  $$S (g
_{Z}, \gamma) \times \widetilde{Y} \subset L (Z) \times
\widetilde{Y} \subset L (Z \times \widetilde{Y}).$$
Given that $O$ is finite, this then readily implies our claim. 
\qed
\section{Preliminaries on Reeb flow} \label{sec_Some preliminaries on Reeb dynamics}
Let $(C ^{2n+1}, \lambda ) $ be a contact manifold with
$\lambda$ a contact form, that is a one form s.t. $\lambda
\wedge (d \lambda) ^{n} \neq 0$.    Denote by
$R ^{\lambda} $ the Reeb vector field 
satisfying: $$ d\lambda (R ^{\lambda}, \cdot ) = 0, 
\quad \lambda (R ^{\lambda}) = 1.$$ 
Recall that a \textbf{\emph{closed $\lambda $-Reeb orbit}}
(or just Reeb orbit when $\lambda $ is implicit)  is
a smooth map $$o: (S ^{1} = \mathbb{R} / \mathbb{Z})    \to
C $$ such 
that $$ \dot o (t) = c R ^ {\lambda}  (o (t)), $$ 
with $\dot o (t) $ denoting the time derivative,
for some $c>0$ called period. Let $S (R ^{\lambda
}, \beta )$ denote the space of all closed ${\lambda }$-Reeb
orbits in free homotopy class $\beta $, with its compact
open topology. And set $$\mathcal{O} (R
^{\lambda}, \beta ) = S (R ^{\lambda }, \beta )/S ^{1}, $$
where $S ^{1} = \mathbb{R} ^{} /\mathbb{Z}  $ acts by reparametrization $t \cdot o(\tau)
= o (t + \tau)$.


\section{Definition of the functional $\operatorname
{F}$ and proofs of auxiliary results} \label{sec:Definition of F} Let $X$ be a 
manifold with a taut metric $g$.  Let $C$ be the unit cotangent bundle
of $X$, with its Louiville contact 1-form $\lambda _{g}$.
If $o: S ^{1} = \mathbb{R} ^{}/\mathbb{Z} \to X$ is a unit speed
closed geodesic, it has a canonical lift $\widetilde{o}: S ^{1} \to C$. 
If $ \beta \in \pi _{1} ^{inc}  (X)$, let $\widetilde{\beta} \in \pi _{1}  (C) $ denote class $[\widetilde{o} ] \in \pi
_{1} (C)$, where $o$ is a unit speed closed geodesic
representing $\beta $.

Let $S (R ^{\lambda _{g}}, \widetilde{\beta})$ be the orbit
space as in Section \ref{sec_Some preliminaries on Reeb
dynamics}, for the Reeb flow of the contact form $\lambda _{g}$. And set $$\mathcal{O} _{g,
\beta}  = \mathcal{O} (R ^{\lambda _{g}}, \widetilde{\beta
} ) :=  S (R ^{\lambda _{g}}, \widetilde{\beta})/S ^{1},$$  
i.e. this can be identified with the space of class ${\beta
} $ $g$-geodesic strings.
By the tautness assumptions $\mathcal{O} _ {g, \beta }$ is compact.

We then define $$F (g, \beta ) = i (\mathcal{O} _{g, \beta
}, R ^{\lambda _{g}}, \widetilde{\beta})
\in \mathbb{Q} $$ where the
right hand side is the Fuller index as outlined in the
Appendix \ref{appendix:Fuller}.
As a basic example we have:
\begin{lemma} \label{lemma_compact}
Suppose that $g$ is a complete metric on $X$, all
of whose class $\beta \in \pi _{1} ^{inc} (X)$ geodesics are
minimal, then $g$ is $\beta$-taut.
\end{lemma}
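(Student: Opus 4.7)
The plan is to reduce $\beta$-tautness to two uniform estimates on the family $S(g,\beta)$---a common length and containment in a single compact subset of $X$---and then to conclude via the Arzel\`a--Ascoli theorem. If $S(g,\beta)$ is empty there is nothing to prove, so assume it is nonempty.

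First, I would extract a uniform length bound. By hypothesis every $o \in S(g,\beta)$ minimizes length in its free homotopy class, so every such $o$ has length exactly $L := \inf_{\gamma \in \beta} l_g(\gamma)$, which is finite (bounded above by the length of any smooth representative of $\beta$) and positive (since $S(g,\beta)$ is nonempty). After unit-speed parametrization each $o \in S(g,\beta)$ is a map $\mathbb{R}/L\mathbb{Z} \to X$, and its image has $g$-diameter at most $L/2$.

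Second, I would show that the images $\bigcup_{o \in S(g,\beta)}\image(o)$ lie in a single compact $K \subset X$. Suppose not; then one can extract a sequence $o_k \in S(g,\beta)$ together with points $x_k \in \image(o_k)$ with $x_k$ eventually leaving every $K_i$ of the exhaustion fixed in Definition \ref{definition_boundaryincompressible}. The full image of $o_k$ lies in the forward closed $g$-ball of radius $L$ about $x_k$, which is compact by the forward Hopf--Rinow theorem for forward complete Riemann--Finsler metrics. For each fixed $i$ I then pass to $k$ so large that this ball, and therefore $\image(o_k)$, is disjoint from $K_i$. Then $o_k$ realises $\beta$ in the image of $\pi_1(X - K_i) \to \pi_1(X)$ for every $i$, contradicting $\beta \in \pi_1^{inc}(X)$.

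Finally, I would apply Arzel\`a--Ascoli. Viewed inside $C^0(\mathbb{R}/L\mathbb{Z}, K)$, the family $S(g,\beta)$ is uniformly $1$-Lipschitz with values in the compact set $K$, hence equicontinuous and pointwise relatively compact. Any $C^0$-limit is a unit-speed loop in class $\beta$ (the homotopy class being $C^0$-stable on loops contained in a fixed compact set), and the geodesic equation is preserved under such limits by standard ODE stability applied to the canonical lifts in the unit (co)tangent bundle, yielding a genuine element of $S(g,\beta)$. I expect the main obstacle to be the compact-containment step, where forward completeness, the uniform length bound, and the precise definition of boundary incompressibility must be orchestrated simultaneously; once the images are trapped in a fixed compact set, tautness follows essentially mechanically.
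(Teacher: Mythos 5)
Your proposal is correct and follows essentially the same route as the paper: minimality forces a uniform length (hence diameter) bound, boundary incompressibility together with forward Hopf--Rinow traps all the images in a single compact set (this is the paper's Lemma \ref{lemma_containedinCompact}), and Arzel\`a--Ascoli then yields compactness of $S(g,\beta)$. The only cosmetic difference is that the paper's containment step splits into two cases (loops lying entirely outside $K_i$ versus loops meeting a fixed compact set but escaping), whereas you argue directly via the closed ball of radius $L$ about an escaping point; both rest on the same ingredients.
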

\begin{proof} [Proof]
First we state a more basic lemma.
\begin{lemma} \label{lemma_containedinCompact} Suppose that
$g$ is a complete metric on $X$, $\beta \in \pi _{1}  ^{inc}
(X)$ and let $S \subset L _{\beta} X$ be a subset on which
the $g$-length functional is bounded from above. Then the images in $X$ of
elements of $S$ are contained in a fixed compact subset of $X$.
\end{lemma}
\begin{proof} [Proof]
Suppose otherwise. Fix an exhaustion by
nested compact sets $$\bigcup _{i \in \mathbb{N} } K _{i}
= X, \quad K _{i} \supset
K _{i-1}.$$ 
Then either there is sequence $\{o _{i}\} _{i \in \mathbb{N}
}$, $o _{i} \in S$  s.t. $o _{i} \in K _{i} ^{c} $, for $K _{i} ^{c}$ the
complement of $K _{i}$, which contradicts the fact that
$\beta $ is end incompressible. 
Or there is a sequence $\{o _{k} \} _{k \in \mathbb{N} }$,
$o _{k} \in S$ s.t.:
\begin{enumerate}
\item Each $o _{k}$
intersects $K _{i _{0}}$ for some $i _{0}$ fixed.
\label{condition_intersects}
\item For each $i \in \mathbb{N} $
there is a $k _{i} >i$ s.t. $o _{k _{i}}$ is
not contained in $K _{i}$. \label{condition_notcontained}
\end{enumerate}
Now if $\operatorname {diam} (o _{k})$ is bounded in $k$, then condition
\ref{condition_intersects} implies that $o _{k}$ are
contained in a set of bounded diameter. (Here $\operatorname
{diam} (o _{k})$ denotes the diameter of $\image o _{k}$.) Consequently, by
Hopf-Rinow theorem ~\cite{cite_ChernBook},  $o _{k}$ are contained in a compact set.
But this contradicts condition \ref{condition_notcontained},
and the fact that $K _{i}$ form an exhaustion of $X$. 

Thus, we conclude that $\operatorname {diam} (o _{k})$ is
unbounded, but this contradicts the hypothesis.
\end{proof}

Returning to the proof of the main lemma.
By assumption, closed, class $\beta \in \pi _{1}  ^{inc}
(X)$ geodesics are $g$-minimizing in their homotopy class and in particular have
fixed length.  By the lemma above there is a fixed $K \subset X$ s.t. every class
$\beta $ closed geodesic has image contained in $K$. Then
compactness of $S (g, \beta )$ follows by Arzella-Ascolli theorem.

\end{proof}

\begin{proof} [Proof Theorem \ref{thm:invariantF}]
Let $\beta \in \pi _{1} ^{inc} (X)$, be given and let $g$ be
$\beta$-taut. We just need to prove that $F (g, \beta)$ is
invariant under a $\beta $-taut deformation of $g$.  So let $\{g _{t}\} $, $t \in
[0,1]$ be a $\beta $-taut deformation of metrics on
a compact manifold $X$. Let $R  ^{\lambda _{g _{t}}} $ be the geodesic flow on
the $g _{t}$ unit cotangent bundle $C _{t}$. Trivializing
the family $\{C _{t}\}$ we get a family $\{R _{t}\}$ of
flows on $C \simeq C _{t} $, with $R _{t}$ conjugate to $R
^{\lambda  _{g _{t}}}$.

Let $\mathcal{O}
(\{R _{t} \}, \widetilde{\beta}) $ be the
cobordism as in \eqref{equation_Ohomotopy}, where $ \widetilde{\beta} \in \pi _{1} (C)$
is as above. Then $\mathcal{O}
(\{R _{t} \}, \widetilde{\beta}) $ is compact as
$S (\{g _{t}\}, \beta )$ is compact by assumption.

Basic invariance of the Fuller index, that is
\eqref{eq_basicinvariance}, immediately yields:
  $F (g _{0},
\beta) = F (g _{1}, \beta )$. 
\end{proof}
\section{Proof of Theorem \ref{thm_estimateIntro}} \label{sec_Proof of Theorem thm_estimategeodesics}
We already know by Example
\ref{exm_negativecurvatureFuller} that $F (g _{0}, \beta)
= \frac{1}{k}$ and hence by Theorem \ref{thm:invariantF}
$F (g _{1}, \beta) = \frac{1}{k}$.
Let $U$ denote the open subset of $L _{\beta} X$ consisting
of loops with $g _{1}$-length less then $C _{\beta}$.
By ~\cite[Lemma 4.1]{cite_SavelyevFuller}, 
for all $\epsilon >0$ sufficiently small, for any $g'$, $C
^{0}$ $\epsilon $ close to $g$ the following holds.
Set $g' _{t} = (t-1) \cdot g _{1} + t \cdot g'$, for
$t \in [0,1]$, then $$ \widetilde{N} = \mathcal{O} (\{g'
_{t}\}, \beta) \cap (U \times [0,1])$$ is an open and compact subset of $\mathcal{O} (\{g' _{t}\}, \beta)$. 	

Now set $$N _{1} = \widetilde{N} \cap (L _{\beta} X \times \{1\}), $$ and $N _{0} = \mathcal{O}
(g _{1}, \beta)$. 
By the invariance property \eqref{eq_basicinvariance} of the Fuller index, we then have
that $$\frac{1}{k} = i (N _{0}, R ^{\lambda _{g _{1}}})
= i (N _{1}, R ^{\lambda _{g'}}). $$
On the other hand, by construction and by index computations
as in ~\cite[Section 2]{cite_SavelyevFuller}),
we get:
$$i (N _{1}, R ^{\lambda _{g'}}) =  \displaystyle \sum_{o \in
\mathcal{O} (g',\beta) \cap U} \frac{(-1) ^{\operatorname {morse}  (o)}} {\mult (o)}.$$
If $\epsilon $ is chosen to be sufficiently small then
$\mathcal{O} (g',\beta) \cap U = \mathcal{O} _{C _{\beta}} (g', \beta
)$. So that we are done. 
\qed
\section{Proof of Theorem \ref{thm_IntroTn}} \label{sec_Proof of Theorem thm_IntroTn}
Let $\beta$ be a non-zero class, as in the statement. Let $Y
\subset T ^{n}$ be a totally geodesic submanifold
diffeomorphic to $T ^{n-1}$, containing $\image \beta$. Let
$\alpha \in H ^{1} (T ^{n}, \mathbb{Z})$ be the Poincare dual of $Y$ and
let $p: T ^{n} \to S ^{1}$ be the classifying map of
$\alpha$. Then clearly $p$ is a $\beta$-taut fibration.

By Theorem \ref{theorem_Eulercharacteristic}, $F (g _{0}, \beta) =0$. 
By the latter
proof, for all $\epsilon >0$  sufficiently small, for any $g'$ as in the statement of our theorem we have:
\begin{equation*}
\displaystyle \sum_{o \in
\mathcal{O} _{C _{\beta}} (g',\beta)} \frac{(-1) ^{\operatorname
{morse}  (o)}} {\mult (o)} = 0.
\end{equation*}
The conclusion then readily follows by basic arithmetic.
\qed

\section{Proof of Theorem \ref{theorem_Eulercharacteristic}} \label{sec_Proof of Theorem theorem_Eulercharacteristic}
This is an application of
Morse theory. As $g$ is $\beta $-taut, $S (g, \beta)$ is
compact.
Let $$L = \sup _{o \in S (g, \beta )} \energy
_{g}(o), $$ where 
$$\energy _{g}: {L}  _{\beta } X \to \mathbb{R} ^{}, $$  is
the function:
\begin{equation} \label{equation_energyfunction}
 \energy _{g} (o) = \int_{S ^{1}}^{} \langle \dot o (t), \dot
 o (t) \rangle _{g} dt.
\end{equation}
Choose $C >L$ and let $U$ denote the subspace of $L
_{\beta } X$ consisting of loops with $g$-energy less than $C$.
Now $U$ has the homotopy type of $L _{\beta}
X $. This can be proved without infinite dimensional Morse
theory. We may use the finite dimensional broken
geodesic approximation as in
Milnor~\cite{cite_MilnorMorsetheory}, (passing to the limit) and the fact that there are no geodesics in the complement of $U$.

If $g'$ is sufficiently $C ^{0}$ nearby to $g$ and is $\beta
$-regular than $$F (g, \beta ) = \displaystyle \sum_{o \in
\mathcal{O} (g',\beta) \cap U} (-1) ^{\operatorname
{morse}  (o)}. $$  

The latter assertion is shown similarly to the proof of Theorem
\ref{thm_estimateIntro}, except now there is no multiplicity
weight since our geodesics are forced have multiplicity one,
by the condition that $\beta $ is not a power.
To finish the proof we just need to show that $\sum_{o \in
\mathcal{O} (g',\beta) \cap U} (-1) ^{\operatorname
{morse}  (o)}$ is the Euler characteristic of $U/S ^{1}$, since the latter is the Euler characteristic of $L _{\beta}X/S ^{1}$.

Let us now denote by  $\mathcal{L}  _{\beta } X$ the Hilbert
manifold of $H ^{1}$ loops, in class $\beta $, 
as used for example in the classical work of
Gromoll-Meyer~\cite{cite_GromollMeyerGeodesics}. 
We also denote by $\mathcal{U}$ the $C$-sublevel
set analogous to $U$.
The Hilbert manifold $\mathcal{L}  _{\beta } X$ is well known to be homotopy equivalent to $L
_{\beta } X$ with its previously used compact open topology.

The energy function $\operatorname{energy} _{g'}:
\mathcal{L}  _{\beta } X \to \mathbb{R} $, defined as above,
is smooth, $S ^{1}$ invariant and satisfies the Palais-Smale condition.  The
flow for its negative gradient vector field $V$ is complete,
and we can do Morse theory mostly as usual. This
is understood starting with the work of
Klingenberg~\cite{cite_KlingenbergClosedGeodesics}, with the
framework of Palais and Smale ~\cite{cite_PalaisSmale}. Note
that all this also applies to $\mathcal{U}$.
In our case, $\operatorname{energy} _{g'}$ is moreover a Morse-Bott function with critical manifolds $C _{o}$ corresponding to $S ^{1}$ families of geodesics, for each geodesic string $o$.

There is an induced Morse-Bott cell decomposition on
$\mathcal{U} $, meaning a stratification formed by $V$ unstable
manifolds of the above mentioned critical manifolds $C
_{o}$. This is Bott's extension of the fundamental Morse
decomposition theorem.  Now the $S ^{1}$ action on $\mathcal{L} 
_{\beta}X$ is free by the condition that
$\beta $ is not a power. This action is not smooth, but it
is continuous. So taking the topological 
$S ^{1}$ quotient, we get a CW cell
decomposition of $\mathcal{U}/S ^{1}$, with one $k$-cell for
each closed $g'$-geodesic string $o$ in $\mathcal{U}$, with Morse index $\operatorname
{morse} (o) = k$. (Here the Morse index is the
Morse-Bott index of the critical manifold $C _{o}$.) All of
the above is well understood, see for instance ~\cite{cite_GromollMeyerGeodesics}.

From the above cell decomposition, we readily get that the
homology $$H_*(\mathcal{U}/S ^{1}, \mathbb{Z})
= H_*(U/S ^{1}, \mathbb{Z}) = H_*({L}
_{\beta}X/S ^{1}, \mathbb{Z}) = H _{*} ^{S ^{1}} (L
_{\beta}  X, \mathbb{Z})$$ is finite dimensional. 
And we get that: 
\begin{align*} 
\chi(U/S ^{1}) & = \displaystyle \sum_{o \in
 \mathcal{O} (g',\beta) \cap U} (-1) ^{\operatorname
{morse}  (o)}  \quad \text{(immediate from the
cell decomposition)}. 
\end{align*}
\qed
\section{Proof of Theorem \ref{thm:Fibration} and
its corollaries}
\label{sec:Proof of Theorem fibration}
We first prove:
\begin{theorem} \label{thm:EulerProduct} Let $p: X \to Y$ be
a $\beta $-taut fibration as in the statement of Theorem
\ref{thm:Fibration} and $ \beta \in \pi _{1} ^{inc} (X)$ a fiber class.
Then 
\begin{equation} \label{equation_mainformula}
F (g, {\beta}) = card \cdot \chi (Y) \cdot
F (g _{Z}, \beta _{Z}),
\end{equation}
where $card \in \mathbb{N} - \{0\} $ is the cardinality of a certain
orbit of the holonomy group (as
explained in the proof), and where $\beta _{Z}$ is as in
Lemma \ref{lemma_betataut}.
\end{theorem}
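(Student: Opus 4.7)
The plan is to localize all class $\beta$ closed geodesics of $g$ to the fibers of $p$, identify the resulting moduli space as a holonomy-twisted family parametrized by $Y$, and then compute the Fuller index by a Morse-theoretic reduction to the critical points of an auxiliary Morse function on $Y$.

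First I claim that every closed class $\beta$ $g$-geodesic $o$ lies in a single fiber. By property~(4) of a $\beta$-taut submersion, $p \circ o$ is a closed $g_Y$-geodesic, and because $\beta$ is a fiber class, $p_*\beta = 0 \in \pi_1(Y)$. Hence $p \circ o$ is contractible, and the hypothesis on $Y$ that every contractible $g_Y$-geodesic is constant forces $\image o \subset Z_y$ for some $y \in Y$. In particular $S(g,\beta)$ is supported on the union of the fibers.

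Next I describe $S(g,\beta)$ as a family over $Y$. Fix a base point $y_0$ and identify $Z \simeq Z_{y_0}$. Since the vertical distribution is parallel (property~2), parallel transport along loops in $Y$ induces an action of $\pi_1(Y,y_0)$ on $\pi_1^{inc}(Z)$, and $i_Z^{-1}(\beta) \subset \pi_1^{inc}(Z)$ is precisely one orbit of this action, of cardinality $card$. Parallel transport along any path in $Y$ moreover yields a taut deformation of the fiber metric by property~(3), so the deformation invariance of $F$ (Theorem~\ref{thm:invariantF}) gives $F(g_y,\alpha) = F(g_Z,\beta_Z)$ for every $y \in Y$ and every $\alpha$ in the orbit.

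Then I would extract $\chi(Y)$ via a Morse-theoretic perturbation on the base. Choose a Morse function $f:Y \to \R$ and, using the parallel structure, deform $g$ to a nearby Riemannian metric $g_\epsilon$ so that the class $\beta$ closed geodesics of $g_\epsilon$ are concentrated in the fibers $Z_y$ over critical points $y$ of $f$, with the normal linearization at each such critical fiber governed by the Hessian of $f$. The deformation can be arranged to remain $\beta$-taut (using the compactness controls built into the taut-submersion hypotheses), so $F(g,\beta) = F(g_\epsilon,\beta)$. Applying the product formula for the fixed-point index to the linearized Poincar\'e return map of the Reeb flow on the unit cotangent bundle, split along the vertical and horizontal directions, the local Fuller-index contribution at each critical fiber $Z_y$ factors as a base sign $(-1)^{\ind(y)}$ times the fiber contribution $card \cdot F(g_Z, \beta_Z)$. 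Summing over $\operatorname{Crit}(f)$ yields
\begin{equation*}
F(g, \beta) = \sum_{y \in \operatorname{Crit}(f)} (-1)^{\ind(y)} \cdot card \cdot F(g_Z, \beta_Z) = card \cdot \chi(Y) \cdot F(g_Z, \beta_Z).
\end{equation*}

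The main obstacle will be the perturbation step, namely constructing $g_\epsilon$ so that it simultaneously (i) is a $\beta$-taut deformation from $g$ and (ii) localizes the class $\beta$ closed geodesics to the critical fibers with the predicted normal index. Part (i) should follow from the compactness controls already present in the taut-submersion hypotheses; part (ii) reduces to the standard product formula for the fixed-point index of the linearized return map decomposed along the vertical and horizontal sub-bundles of the unit cotangent bundle. The Riemannian hypothesis on $g$ enters precisely here, since this splitting uses the Levi-Civita connection.
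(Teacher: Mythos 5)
Your overall skeleton --- localize class $\beta$ geodesics to the fibers, bring in a Morse function on $Y$, sum $(-1)^{\ind(y)}$ over critical points, and account for the holonomy orbit of $\beta_Z$ --- is the same as the paper's, and your first two steps (verticality of all class $\beta$ geodesics via $p_*\beta=0$ together with constancy of contractible $g_Y$-geodesics; and $F(g_y,\alpha)=F(g_Z,\beta_Z)$ via condition (3) of the taut-submersion definition) coincide with the paper. The genuine gap is the localization step, which you flag but do not resolve, and which is exactly where the paper does something different. You propose to realize the localization by a metric perturbation $g_\epsilon$ and then invoke the global deformation invariance $F(g,\beta)=F(g_\epsilon,\beta)$ of Theorem \ref{thm:invariantF}. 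That requires (i) actually producing $g_\epsilon$ (say by rescaling the vertical part of $g$ by a function of $f\circ p$, which the parallel splitting permits), (ii) showing that $g_\epsilon$ has \emph{no} class $\beta$ closed geodesics other than the vertical ones over $\operatorname{crit}(f)$ --- note that for $g_\epsilon$ the map $p$ no longer sends geodesics to $g_Y$-geodesics, so the argument you used to confine class $\beta$ geodesics of $g$ to fibers is unavailable, and new non-vertical closed geodesics, possibly of large length, are not excluded --- and (iii) showing that the interpolating family $\{g_t\}$ is $\beta$-taut. Neither (ii) nor (iii) follows from ``the compactness controls built into the taut-submersion hypotheses'': those hypotheses constrain $g$ only, and controlling all closed geodesics of nearby metrics is precisely the kind of statement (unbounded lengths, sky-catastrophe behavior) that the tautness machinery exists to guard against.

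The paper sidesteps all of this by perturbing not the metric but the vector field: on the unit cotangent bundle it takes $V_t = R^{\lambda_g} - t\operatorname{grad}_{g_S}(P+\widetilde{f})$, with $P$ the squared vertical momentum and $g_S$ the Sasaki metric --- a perturbation that is deliberately not the Reeb field of any metric. Because the fibers are parallel, $P$ is monotone along $V_t$-flow lines unless identically $1$ or $0$, so closed $V_t$-orbits are either vertical over critical points of $f$ or everywhere $g$-orthogonal to $T^{vert}X$; this makes $N_t=\mathcal{O}_t\cap\mathcal{O}_{g,\beta}$ open and closed in $\mathcal{O}_t$ (Lemma \ref{lemma_epsilon}), and then only the \emph{basic} invariance \eqref{eq_basicinvariance} of the Fuller index for an open compact piece of the parametrized orbit space is needed --- no tautness of the perturbed objects, and no need to rule out the extra ``horizontal'' orbits, which simply lie outside $N_t$. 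Your final bookkeeping (the sign $(-1)^{\ind(y)}$ from the base and the factor $card\cdot F(g_Z,\beta_Z)$ from each critical fiber) agrees with the paper's computation of $i(N_1,V_1,\widetilde{\beta})$, but as written your argument rests on an unconstructed metric perturbation whose required properties are exactly the hard part.
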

\begin{proof} [Proof]
We have a natural subset of $\mathcal{O}' \subset \mathcal{O} {g, \beta }$, consisting of all vertical geodesics, that is
$g$-geodesics contained in fibers $p ^{-1} (y)
= Z _{y} $. In fact, 
\begin{equation} \label{eq_O'=O}
\mathcal{O}' = \mathcal{O} {g,
\beta},
\end{equation}
for if $o$ is any class $\beta $ closed geodesic, the
projection $p (o)$ is a contractible closed $g
_{Y}$-geodesic, and by assumptions is constant.

In particular, there a natural continuous projection
$$\widetilde{p}: \mathcal{O} {g, \beta} \to Y,
\quad \widetilde{p}(o)
= y$$ where $y$ is determined by the condition that $$Z _{y}
\supset \image o.$$
We will use this to construct a suitable (in a sense
abstract i.e. not Reeb) perturbation of the vector field
$R ^{\lambda _{g}}$, using which we can calculate the
invariant $F (g, \beta )$.

Fix a Morse function on $f$ on $Y$, let $C= S ^{*} X$ denote
the $g$-unit cotangent bundle of $X$. For $v \in T _{x} X$ let
$\langle v|  $ denote the
functional $$T _{x} X \to \mathbb{R} ^{}, \quad  w
\mapsto \langle v, w \rangle _{g}.$$ 
Define $\widetilde{f}: C \to \mathbb{R} ^{} $ by
$$\widetilde{f} (\langle v|) := f (p (v)),$$ also define
$$P: C \to \mathbb{R} ^{} $$ by $$P (\langle v|) := |P^{vert} (v)|  ^{2}
_{g},$$ where $P ^{vert} (v)$ denotes the	$g$-orthogonal
projection of $v$ onto the $T ^{vert} _{x} X \subset
T _{x}X$, for $T ^{vert} X$ the vertical tangent bundle of
$X$, i.e. the kernel of the bundle map $p_*: TX \to TY$.

Next define $F: C \to \mathbb{R} ^{} $ by:
\begin{equation*}
F (\langle v|) :=  P (\langle v|) + \widetilde{f}  (\langle v|).
\end{equation*}

Set $$V _{t} = R ^{\lambda _{g}} - t\operatorname {grad} _{g
_{S}}
F,$$ where the gradient is taken with
respect to the Sasaki metric $g _{S}$ on $C$
~\cite{cite_SasakiMetric} induced by $g$. The latter Sasaki
metric is the natural metric for which we have an orthogonal splitting $TC
= T ^{vert}C \oplus T ^{hor} C$, where $T ^{vert} C$ is the
kernel of $pr_*: TC \to TX$, induced by the natural
projection $pr: C \to X$, and where $T ^{hor} C$  is the
$g$ Levi-Civita horizontal sub-bundle.  

Set $\mathcal{O} _{t} = \mathcal{O} (V _{t},
\widetilde{\beta} )$, where	$\widetilde{\beta } $  is
as in Section \ref{sec:Definition of F}.
\begin{lemma} \label{lemma_epsilon}
We have:
\begin{enumerate}
\item For all $t \in [0, 1]$, $N_t := \mathcal{O} _{t}  \cap
\mathcal{O} _{g, \beta  }$ is open and closed in $\mathcal{O} _{t}$.
\item For all $t \in (0, 1]$, $N _{t} = \cup _{y \in \operatorname {crit}  (f)} \widetilde{p}
^{-1} (y),$ where $\operatorname {crit}  (f)$ is the set of critical points of $f$. 
\end{enumerate}
\end{lemma}
\begin{proof} [Proof]
It is easy to see that $V _{t}$ is complete and without
zeros. Suppose that $t>0$.
Let $\langle v _{\tau} | $,
$\tau \in \mathbb{R} ^{} $ be the flow line of $V _{t}$,
through $\langle v _{0} | $, i.e. $\langle v _{\tau} | = \phi 
_{\tau} (\langle v _{0} | )$, for $\phi  _{\tau}$ the
time $\tau$ flow map of $V _{t}$. By the fact that the fibers of $p$ are
assumed to be parallel, we have that 
$${R ^{\lambda _{g}}}
(P) =0, \quad \text{using the derivation notation}.$$ 
Also, $$\operatorname {grad} _{g
_{S}} \widetilde{f} (P) =0,$$ which readily follows by the
conjunction of $g _{S}$ being Sasaki and the fibers of $p$
being parallel.
Consequently, the function $$\tau \mapsto P (\langle v _{\tau}|)
= |P^{vert} (v
_{\tau})| ^{2} _{g}$$ is monotonically decreasing unless either:
\begin{enumerate}
	\item $v _{0}$ is tangent to $T ^{vert} X$, in which case
	for all $\tau$, $v _{\tau} $ are tangent to $T ^{vert} X$
	and $|P^{vert} (v _{\tau})| ^{2} _{g} =1$.
	\item For all $\tau$, $|P^{vert} (v _{\tau})| ^{2} _{g} =0$.
\end{enumerate}

In particular, the closed orbits of $V _{t}$ split into two
types. 
\begin{enumerate}
	\item Closed orbits $o (\tau) = \langle v _{\tau}
	|  $ with $v _{\tau}$
always tangent to $T ^{vert}X$. In this case we may
immediately, conclude that $o$ is a lift to $C$ of a closed
$g$-geodesic contained in the fiber over a critical point of $f$.
\item Closed orbits $o (\tau) = \langle v _{\tau} |  $ for
which $v _{\tau}$
is always $g$-orthogonal to $T ^{vert}X$. 
\end{enumerate}

Clearly, the conclusion follows.
\end{proof}
\begin{remark} \label{remark_}
It would be very fruitful to remove the condition on the
fibers of $p$ being parallel. But our argument would need to
substantially change.
\end{remark}

We return to the proof of the theorem.
Set $$\widetilde{N}   = \{(o,t) \in L _{\widetilde{\beta} } C \times [0, \epsilon]
\,|\, o \in {N} _{t}  \},$$ where $L _{\widetilde{\beta}
} C$ denotes the $\widetilde{\beta } $ component of the free
loop space as previously.
By part I of Lemma \ref{lemma_epsilon},  this is an open compact subset of $\mathcal{O} (\{V
_{t}\}, \widetilde{\beta })$ s.t. $$\widetilde{N}
\cap  (L _{\widetilde{\beta} } C \times \{0\}) = \mathcal{O} (R ^{\lambda
_{g}}, \widetilde{\beta}),$$ (equalities throughout are up to natural set theoretic identifications.)


By definitions: 
\begin{equation*}
N _{t}  = \widetilde{N}  \cap (L _{\widetilde{\beta}
} C \times \{t\}). 
\end{equation*}
Now  the basic invariance of the Fuller index, \eqref{eq_basicinvariance} gives:
\begin{equation*}
i(N _{0}, R ^{\lambda _{g}}, \widetilde{\beta }) = i (N
_{1}, V _{1}, \widetilde{\beta } ).
\end{equation*}

We proceed to compute the right hand side.
Fix any smooth Ehresmann connection $\mathcal{A} $ on the fiber bundle $p:
X \to Y$. This induces a holonomy homomorphism:
$$hol _{y}: \pi _{1} (Y, y) \to \operatorname {Aut}	\pi _{1} (Z
_{y}) \text{ (the right-hand side is the group of set
automorphisms)}, $$ with image denoted $\mathcal{H} _{y} \subset \operatorname {Aut}	\pi _{1} (Z
_{y})$.

Let $\beta _{Z}$ denote a class in $\pi _{1} (Z _{y})$ s.t. $(i
_{Z _{y}})_* (\beta _{Z}) = \beta $, for $i _{Z _{y}}:
Z _{y} \to X$ the inclusion map. 
Set $$S _{y} := \bigcup _{g \in \mathcal{H} _{y}} g ( \beta _{Z})
\subset \pi _{1}  (Z _{y}). $$ 
Then for another $y' \in Y$, 
\begin{equation} \label{equation_Sy}
h_*: S _{y'} \to S _{y}, 
\end{equation}
is an isomorphism, 
where $h: Z _{y} \to Z _{y'}$ is a smooth map given by the
$\mathcal{A} $-holonomy map determined by some path from $y$
to $y'$, and where $h _{*}$ is the naturally induced map.

Denoting by $g _{y}$ the restriction of $g$ to the fiber $Z _{y}
$, let $R ^{{y}}$ denote the $\lambda _{g _{{y}}}$ Reeb vector field on the $g _{Z
_{y}}$-unit cotangent bundle $C _{y}$ of $Z _{y}$. 
The cardinality $card$ of $S _{y}$ is finite, as otherwise
we get a contradiction to the compactness of $S (g,
{\beta} )$.
Now
\begin{equation*}
\widetilde{p} ^{-1} (y) = \bigcup _{\alpha \in S _{y}}
\mathcal{O} (R ^{\lambda _{y}}, \alpha).
\end{equation*}

From part 2 of Lemma \ref{lemma_epsilon} and by
index computations as in ~\cite[Section
2]{cite_SavelyevFuller}), we get: 
\begin{equation*}
i (N _{1}, V _{1}, \widetilde{\beta } ) = \sum_{y
\in \operatorname {crit} (f)} (-1)^{\operatorname {morse}
(y)} \cdot i (\widetilde{p} ^{-1} (y), R ^{\lambda _{y}},
\widetilde{\beta }  ), 
\end{equation*}
where $\operatorname {morse} (y)$ denotes the Morse index of
$y$.
Now
\begin{align*}
i (\widetilde{p} ^{-1} (y), R ^{\lambda _{y}},
\widetilde{\beta }) & = \sum _{\alpha \in S _{y}} 
i (\mathcal{O} (R ^{\lambda _{y}}), R ^{\lambda _{y}}, 
{\widetilde{\alpha}}) \\ 
& = \sum_{\alpha \in S _{y}} F (g _{y}, \alpha). \\
& = card \cdot F (g _{Z}, \beta _{Z}),
\end{align*}
where the last equality follows by \eqref{equation_Sy}, and
by the condition \ref{item_tauthomotopyfibers} in the
Definition \ref{definition_betataut}. And so the result follows. 
\end{proof}
We return to the proof of the main theorem. The first part
immediately follows from the second, as 
any class $\beta \in \pi _{1} ^{inc} (X)$ geodesic strings of
a complete negatively curved Riemannian manifold $X$ are unique. We prove the
second part first. Suppose first that $\beta
$ is an $n$-power: $\beta = \alpha ^{n}$, for some
$n \geq 1$ where $\alpha $ is not a power. By the assumption that
all contractible $g _{Y}$ geodesics are constant, classical 
Morse theory Milnor~\cite{cite_MilnorMorsetheory} tells us
that $Y$ has vanishing higher homotopy groups $\pi _{k} (Y,
y _{0})$, $k \geq 2$. And in particular $i _{Z, *}: \pi
_{1} (Z, p _{0}) \to \pi _{1} (X, p _{0}) $ is a group
injection, by the long exact sequence of a fibration. It follows that $\alpha \in \pi _{1} ^{inc} (X)$
is also a fiber class. 

Now, any $\alpha $-class $g$-geodesic string must be contained in a fiber of $p$. For otherwise
we may find a $\beta $ class $g$-geodesic string, which is
not contained in a fiber of $p$, which would contradict 
\eqref{eq_O'=O}.
It readily follows that $p: X \to Y$ is also $\alpha $-taut.

Now, if $\chi(Y) \neq \pm 1$ then by \eqref{equation_mainformula} $F (g, \alpha
) \neq  1$, since $F (g _{Z}, \alpha _{Z})$ is an integer by
Theorem \ref{theorem_Eulercharacteristic}.  By Theorem \ref{theorem_Eulercharacteristic}
\begin{equation*}
F (g, \alpha ) = \chi ^{S ^{1}}(L	_{\alpha}X).
\end{equation*}
So if $X$ admits a complete metric with a unique and non-degenerate
class $\alpha$ $g$-geodesic string then we have: $$F (g, \alpha
) = \chi ^{S ^{1}}(L	_{\alpha}X) = 1$$ (see Proof
of Theorem \ref{theorem_Eulercharacteristic}), which is
impossible.
It follows that $X$ does not admit a metric with a unique and non-degenerate
class $\beta $ $g$-geodesic string. For if
$o,o'$ are distinct, non-degenerate, class $\alpha
$ $g$-geodesics strings, then
the $n$-fold covers $o ^{n}, (o') ^{n}$ are class $\beta
$, distinct non-degenerate $g$-geodesic strings.

We now prove the general case. Suppose by contradiction that $X$
admits a metric with a unique and non-degenerate class
$\beta $ $g$-geodesic string $o$. By the above, $\beta$ is not
atomic. Now $o$ covers a multiplicity one geodesic string
$\widetilde{o} $ in some class $\widetilde{\beta} \in \pi _{1}
^{inc} (X)$. Moreover, $\widetilde{o}$ is the unique
geodesic string in its class, otherwise $o$ would not be
unique in its class.
We prove that $\widetilde{\beta} $ is not
a power, which will be a contradiction to $\beta $ not being
atomic and will complete the
proof.

Suppose otherwise, so that  $\widetilde{\beta} _{x
_{0}}  = \alpha ^{k}$ for $k>1$ and $\alpha \in \pi _{1} (X,
x _{0})$, ($\beta _{x _{0}}$ is as in Definition
\ref{definition_indivisible}). 
Let $u$ be a class $\alpha $, closed
$g$-geodesic string (where $\alpha$ also denotes the class
in $\pi _{1}  ^{inc} (X)$ corresponding to the based class $\alpha $.) 
It is immediate
that the $k$ cover of $u$, $u ^{k}$ represents
$\widetilde{\beta} $ and is a $g$-geodesic string. By the
uniqueness, $\widetilde{o} = u ^{k}$. But this contradicts
simplicity of $\widetilde{o} $. So $\widetilde{\beta } $ is
not a power.

\qed
\begin{proof} [Proof of Theorem \ref{corollary_product}]
Let $g _{Z}, g _{Y}$ be complete
Riemannian metrics on $Z$ respectively $Y$ with non-positive curvature.  Take the product metric $g = g _{Z} \times
g _{Y}$ on $X = Z \times Y$.  For a class $\beta \in \pi _{1} (Z)$ in the image of the inclusion $\pi _{1}
(Z) \to \pi _{1} (X)$, the natural projection $X \to Y$ is
automatically a $\beta $-taut fibration. Then the
conclusion readily follows from Theorem \ref{thm:Fibration}.

\end{proof}

\begin{proof} [Proof of Theorem \ref{theorem_valuesOfInvariant}]
By Theorem \ref{thm:EulerProduct} 0 is certainly a value of
the invariant $F$. We first prove that every negative rational number is
the value of the invariant. 
Let $p,q $ be positive integers. Let $Y
$ be a closed surface of genus $(p +1) >1$ with a hyperbolic
metric $g _{Y}$, let $Z$ be the
genus 2 closed surface with a hyperbolic metric $g _{Z}$ and
let $\beta _{Z} \in \pi _{1} ^{inc} (Z)$ be
the class represented by a $2 \cdot q$-fold covering of
a simple closed loop representing a generator of the fundamental group of $Z$.

Let $X = Y \times Z$ with the product metric $g = g _{Y}
\times g _{Z}$ and $p: X \to Y $  the
canonical projection. By Theorem \ref{thm:EulerProduct} $$F
(g, \beta) = \chi(Y)  \cdot F (g _{Z}, \beta _{Z}) = (
- 2p) \cdot \frac{1}{2q} = - \frac{p}{q}, $$
where $\beta $ is  as in Lemma \ref{lemma_betataut}. So we
proved our first claim.

Let again $p,q$ be positive integers. Let $Y$ be closed surface of
genus $2$, with a hyperbolic metric $g _{Y}$. And let $Z$ be a manifold satisfying $F (g _{Z}, \beta
_{Z}) = -\frac{p}{2q}$ for some $\beta _{Z}$-taut metric $g
_{Z}$ on $Z$ and for some class $\beta _{Z} \in \pi _{1} ^{inc} (Z)$. This
exist by the discussion above. Let $g = g _{Y} \times
g _{Z}$ be the product metric on $Y \times Z$, and $\beta
 $ as above. Analogously to the discussion above we get:
$$F (g, \beta) = \chi(Y)  \cdot F (g _{Z}, \beta _{Z}) = (
-2) \cdot \frac{-p}{2q} =  \frac{p}{q}. $$
\end{proof}

\begin{appendices} 
\section {Fuller index and sky catastrophes}
\label{appendix:Fuller} Let $X$ be a complete vector field
without zeros on a manifold $M$. 
Set 
\begin{equation}  S (X, \beta) = 
   \{o \in L _{\beta} M \,\left .  \right | \exists p \in 
   (0, \infty), \, \text{ $o: \mathbb{R}/\mathbb{Z} \to M $ is a
   periodic orbit of $p X $} \}.
\end{equation}
The above $p$ is uniquely
determined and we denote it by $p (o) $ called the period of
$o$.

There is a natural $S ^{1}$ reparametrization action on $S (X, \beta)$:  $t \cdot o$ is the loop $t \cdot o(\tau) = o (t + \tau)  $.
The elements of $\mathcal{O} (X, \beta) := S (X, \beta)/S ^{1} $ will be 
called \textbf{\emph{orbit strings}}. 
Slightly abusing notation we just write $o$  for the
equivalence class of $o$.  

The multiplicity $m (o)$ of an orbit string is
the ratio $p (o) /l$ for $l>0$ the period of a simple
orbit string covered by $o$. 

We want a kind of fixed point index which counts orbit
strings $o$ with certain weights.
Assume for simplicity that  $N \subset \mathcal{O} (X, \beta) $ is
finite. (Otherwise, for a general open compact $N \subset
\mathcal{O} (X, \beta)$, we need to perturb.) Then
to such an $(N,X, \beta)$
Fuller associates an index: 
\begin{equation*}
   i (N,X, \beta) = \sum _{o \in N}\frac{1}{m
   (o)} i (o),
\end{equation*}
 where $i (o)$ is the fixed point index of the time $p (o) $ return map of the flow of $X$ with respect to
a local surface of section in $M$ transverse to the image of $o$. 

Fuller then shows that $i (N, X, \beta )$ has the following invariance property.
For a continuous homotopy $\{X _{t}
\}$,  $t \in [0,1]$ set \begin{equation*}  S (\{X _{t}\}, \beta)  = 
   \{(o, t) \in L _{\beta} M \times [0,1] \,|\,  
	  \text{$o \in S (X _{t})$}\}.
\end{equation*}
And given a continuous homotopy $\{X _{t}
\}$, $X _{0} =X $, $t \in [0,1]$, suppose that 
$\widetilde{N} $ is an open compact subset of 
\begin{equation} \label{equation_Ohomotopy}
\mathcal{O} (\{X _{t}\}, \beta  ) := S (\{X _{t} \}, \beta ) / S ^{1},
\end{equation}
such that $$\widetilde{N} 
\cap \left (L _{\beta }M \times \{0\} 
\right) / S ^{1} =N.
$$ Then if $$N_1 = \widetilde{N} \cap \left (L _{\beta }M 
  \times \{1\} \right) / S 
^{1}$$ we have 
\begin{equation} \label{eq_basicinvariance}
i (N, X, \beta ) = i (N_1, X_1, \beta).
\end{equation} 
We call this \textbf{\emph{basic invariance}}.  
In the case $\mathcal{O} (X
_{0}, \beta )$ is compact, $\mathcal{O} (X _{1}, \beta )$ is
compact for any sufficiently $C	^{0}$ nearby $X _{1}$, and
in this case basic invariance implies (see for instance
~\cite [Proof of Lemma 1.6]{cite_SavelyevFuller}):
\begin{equation} \label{eq_basicinvariance2}
i (\mathcal{O} (X _{0}, \beta ), X, \beta ) = i (\mathcal{O}
(X _{1}, \beta ), X_1, \beta).
\end{equation}

\subsection {Blue sky catastrophes} \label{appendix_bluesky}
\begin{definition} [Preliminary] 
A \textbf{\emph{sky catastrophe}} for a smooth family $\{X
_{t} \}$, $t \in [0,1]$, of non-vanishing vector fields on
a closed manifold $M$ is a continuous family of closed orbit
strings $\tau \mapsto o _{t _{\tau} }$, $o _{t _{\tau}
} $ is an orbit string of $X _{t _{\tau} } $, $\tau \in [0,
\infty)$, such that the period of $o _{t _{\tau} } $ is unbounded from above.
\end{definition}
A sky catastrophe as above was initially constructed by
Fuller ~\cite{cite_FullerBlueSky}. Or rather his
construction essentially contained this phenomenon. 
A more general definition appears in
~\cite{cite_SavelyevFuller}, we slightly extend it here to
the case of non-compact manifolds. All these
definitions become equivalent given certain regularity
conditions on the family $\{X _{t}\}$ and assuming $M$ is
compact.
\begin{definition}\label{def:bluesky}
Let  $\{X _{t} \}$, $t \in [0,1] $ be a continuous family of
non-zero, complete smooth vector fields on a manifold
$M$ and $\beta \in \pi _{1}  ^{inc} (X)$. 

We say that $\{X _{t}\}$  has a \textbf{\emph{catastrophe in class $\beta$}}, if
there is an element $$y \in \mathcal{O}  (X _{0}, \beta)
\sqcup \mathcal{O}  (X _{1}, \beta)
\subset \mathcal{O}  (\{X
_{t}\},  \beta)
   $$ such that there is no open compact subset
of $\mathcal{O} (\{X _{t}\}, \beta) $ containing $y$. 
\end{definition}
A vector field $X$ on $M$ is \textbf{\emph{geodesible}} if
there exists a metric $g$ on $M$ s.t. every flow line of $X$
is a unit speed $g$-geodesic. A family $\{X _{t}\}$ is
\textbf{\emph{geodesible}}  if there is a continuous family
$\{g _{t}\}$ of metrics, with $X _{t}$ geodesible with
respect to $g _{t}$ for each $t$.
A family $\{X _{t}\}$ is
\textbf{\emph{geodesible}}  if there is a continuous family
$\{g _{t}\}$ of metrics with $X _{t}$ geodesible with
respect to $g _{t}$ for each $t$. 
A \textbf{\emph{geodesible sky catastrophe}} is a geodesible
family $\{X _{t}\}$ with a sky catastrophe.
A \textbf{\emph{Reeb sky catastrophe}} is
a  family of Reeb vector fields $\{X _{t}\}$ with a sky catastrophe.
\end{appendices}
\section{Acknowledgements} 
This paper was substantially edited during my stay as
a member at IAS, and I am grateful for the wonderful time,
and the resources provided. 
\bibliographystyle{siam} 
\bibliography{link.bib} 
\end{document}